\documentclass{article}

\usepackage{arxiv}

\usepackage[utf8]{inputenc} 
\usepackage[T1]{fontenc}    
\usepackage{hyperref}       
\usepackage{url}            
\usepackage{booktabs}       
\usepackage{amsfonts}       
\usepackage{nicefrac}       
\usepackage{microtype}      
\usepackage{lipsum}
\usepackage{amsmath}

\usepackage{amsfonts,amssymb}
\usepackage{graphicx}
\usepackage{amsthm}
\usepackage{xcolor}

\newcommand{\R}{\mathbb{R}}

\newtheorem{theorem}{Theorem}[section]
\newtheorem{proposition}{Proposition}[section]
\newtheorem{lemma}{Lemma}[section]
\newtheorem{corollary}{Corollary}[section]

\usepackage{cite}

\usepackage{authblk}

\newcommand{\p}{\partial}
\newcommand{\bb}{\begin{equation}}
\newcommand{\ee}{\end{equation}}
\newcommand{\ba}{\begin{array}}
\newcommand{\ea}{\end{array}}
\newcommand{\f}{\frac}
\usepackage[all]{xy}
\newcommand{\ds}{\displaystyle}

\newcommand{\al}{\alpha}
\newcommand{\be}{\beta}

\newcommand{\sn}{\text{sn}\,}
\newcommand{\dn}{\text{dn}\,}
\numberwithin{equation}{subsection}

\title{Invariants and wave breaking analysis of a Camassa-Holm type equation with quadratic and cubic nonlinearities}

\author{Igor Leite Freire$^1$, Nazime Sales Filho$^{2,3}$, Ligia Corrêa de Souza$^{2,4}$ and Carlos Eduardo Toffoli$^{2,4}$\\
$^1$Centro de Matem\'atica, Computa\c{c}\~ao e Cogni\c c\~ao,\\ Universidade Federal do ABC,\\ Avenida dos Estados, $5001$, Bairro Bangu,
$09.210-580$, Santo Andr\'e, SP - Brasil\\
\texttt{igor.freire@ufabc.edu.br and igor.leite.freire@gmail.com}\\
 
$^2$Programa de Pós-Graduação em Matemática, \\
 Universidade Federal do ABC, \\Avenida dos Estados, $5001$, Bairro Bangu,
$09.210-580$, Santo Andr\'e, SP - Brasil\\
$^3$Faculdade de Engenharia,\\Universidade Federal de Mato Grosso,\\Av. Fernando Corrêa da Costa, nº 2367 - Bairro Boa Esperança. Cuiabá - MT - 78060-900\\
\texttt{sales.nazime@gmail.com}\\
$^4$Instituto Federal de Educação, Ciência e Tecnologia de São Paulo,\\ Campus de Campos do Jordão,\\ Rua Monsenhor José Vita, 280,
12460-000, Abernéssia, Campos do Jordão, SP - Brasil\\
\texttt{li.correasouza@gmail.com and ligiacorrea@ifsp.edu.br} \\ \texttt{tofolis@gmail.com}
}

\begin{document}
\maketitle

\begin{abstract}
A non-local evolution equation of the Camassa-Holm type with dissipation is considered. The local well-posedness of the solutions of the Cauchy problem involving the equation is established via Kato's approach and the wave breaking scenario is also described. To prove such result, we firstly construct conserved currents for the equation and from them, conserved quantities.
\end{abstract}

\keywords{Camassa-Holm type equations \and  well-posedness \and wave breaking \and symmetries \and conservation laws}
\newpage

\section{Introduction}

The equation
$$
u_t-u_{txx}+3uu_x=2u_xu_{xx}+uu_{xxx},
$$
where $u=u(t,x)$, was derived by Camassa and Holm in their impacting work \cite{chprl} and is named after them. Since then such equation has been widely investigated from different perspectives, ranging from the structural viewpoint, such as symmetries and conservation laws \cite{chprl,anco,clark,pri-aims,igor-cm}, to qualitative standpoint, such as local and global well-posedness \cite{blanco,const1998-1,const2000-1,escher}, blow-up of solutions \cite{const1998-1,const1998-2} and their stability \cite{const2000-2,const2002}. While in the references \cite{const1998-1,const1998-2,const2000-1,escher} the Cauchy problem involving the Camassa-Holm equation is considered over the real line $\R$ with initial data $u_0\in H^3(\R)$, in \cite{const1998-3} is considered the question of well-posedness and blow-up of solutions of the periodic Camassa-Holm equation.

It is worth mentioning that the work by Camassa and Holm was the cornerstone of an active field in Mathematics and Mathematical Physics and since its discovery several equations of the type $u_t-u_{txx}=F(u,u_x,u_{xx},u_{xxx})$, where $F$ is a smooth function of its arguments, have been proposed and investigated, see \cite{anco,pri-book,pri-aims,raspa,silvajde2019,silva2019,igor-jde,igor-cm,hak,liu2011,mustafa} and references thereof.

The equation
\bb\label{1.0.1}
u_t-u_{txx}+3u^2u_x=uu_{xxx}+2u_xu_{xx}
\ee
was considered in \cite{waz} and has been attracting some attention, see \cite{chen2020,daros,kelly,natali,yin2010} and references thereof. We were driven to this equation by the works of Darós \cite{daros}, Darós and Arruda \cite{kelly}, and Martins and Natali \cite{natali}. 

The Cauchy problem involving \eqref{1.0.1} with a given initial data $u_0\in H^s(\R)$ is locally well-posed for $s>3/2$ . This can be proved using Kato's approach \cite{kato}, see \cite{hak,mustafa,liu2011,silvajde2019}, where the Cauchy problem of equations including \eqref{1.0.1} was considered. We would also like to point out that \eqref{1.0.1} is a particular case of generalisations of the Camassa-Holm equation previously considered in \cite{hak,mustafa,liu2011, waz}.

In this paper we are concerned with the Cauchy problem
\bb\label{1.0.2}
\left\{
\ba{l}
u_t-u_{txx}+\lambda(u-u_{xx})+3u^2u_x=uu_{xxx}+2u_xu_{xx},\\
\\
u(0,x)=u_0(x),
\ea
\right.
\ee
where $\lambda$ is a constant. This is a natural generalisation of \eqref{1.0.1} and, for $\lambda>0$, it can be seen as a (weakly) dissipative form of the equation \eqref{1.0.1}. 

\subsection{Main qualitative results}

A natural question about the Cauchy problem \eqref{1.0.2} is whether it is well-posed. A first answer to it is the following result:
\begin{theorem}\label{teo1.1}
Given $u_0\in H^{s}(\R)$, $s>3/2$, then there exist a maximal time of existence $T=T(u_0)>0$ and a unique solution $u$ to the Cauchy problem \eqref{1.0.2} satisfying the initial condition $u(0,x)=u_0(x)$, such that $u=u(\cdot,u_0)\in C^{0}([0,T);H^{s}(\R))\cap C^{1}([0,T);H^{s-1}(\R))$. Moreover, the solution depends continuously on the initial data, in the sense that the map $u_0\mapsto u(\cdot,u_0):H^{s}(\R)\rightarrow C^{0}([0,T);H^{s}(\R))\cap C^{1}([0,T);H^{s-1}(\R))$ is continuous and $T$ does not depend on $s$.
\end{theorem}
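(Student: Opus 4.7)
The strategy is to cast \eqref{1.0.2} into an abstract quasilinear evolution equation and apply Kato's local existence theorem \cite{kato}, following the template of \cite{hak,mustafa,liu2011,silvajde2019}. First I would rewrite \eqref{1.0.2} in non-local form. Using the identities $(uu_x)_{xx}=3u_xu_{xx}+uu_{xxx}$ and $\p_x(u^{3})=3u^{2}u_x$, one checks (by expanding $(1-\p_x^{2})$ on both sides) that the Cauchy problem \eqref{1.0.2} is equivalent to
$$
u_t+uu_x+\lambda u+\p_x(1-\p_x^{2})^{-1}\!\left(u^{3}+\tfrac{1}{2}u_x^{2}-\tfrac{1}{2}u^{2}\right)=0,\qquad u(0,x)=u_0(x).
$$
Writing $p(x)=\tfrac{1}{2}e^{-|x|}$ for the Green's function of $1-\p_x^{2}$, this has the abstract form $u_t+A(u)u=f(u)$ with $A(u)v:=uv_x$ and $f(u):=-\lambda u-\p_x p\ast(u^{3}+\tfrac{1}{2}u_x^{2}-\tfrac{1}{2}u^{2})$.

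Next I would set $Y:=H^{s}(\R)$, $X:=H^{s-1}(\R)$, and $Q:=(1-\p_x^{2})^{1/2}$ (an isometric isomorphism $Y\to X$), and verify the four hypotheses of Kato's theorem, uniformly for $u$ in any closed ball of $Y$:
(K1) $A(u)\in L(Y,X)$ and $u\mapsto A(u)$ is Lipschitz on bounded subsets of $Y$;
(K2) $A(u)$ is quasi-$m$-accretive on $X$;
(K3) the commutator $B(u):=QA(u)Q^{-1}-A(u)$ extends to an element of $L(X)$ which depends continuously on $u\in Y$ with a uniform bound on $Y$-balls;
(K4) $f:Y\to Y$ is bounded and Lipschitz on bounded subsets.

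The main technical obstacle is (K3), which reduces to a Kato--Ponce commutator estimate for $[Q,u\p_x]Q^{-1}$; the algebra property of $H^{s-1}$ (valid for $s>3/2$) together with the embedding $H^{s-1}\hookrightarrow L^{\infty}$ then closes the bound. Condition (K2) follows from a standard energy estimate, since integration by parts controls $\langle u\p_x v,v\rangle_{H^{s-1}}$ by $\|u_x\|_{L^{\infty}}\|v\|_{H^{s-1}}^{2}$ plus commutator terms of the same type as in (K3). Conditions (K1) and (K4) are routine once one uses that $H^{s}$ is a Banach algebra for $s>3/2$: then $u^{2},u^{3}\in H^{s}$, $u_x^{2}\in H^{s-1}$, and the operator $\p_x p\ast\cdot$ maps $H^{s-1}$ boundedly into $H^{s}$, so $f$ sends $Y$ to $Y$; the extra linear piece $-\lambda u$ is trivially bounded and Lipschitz, so the dissipative parameter does not affect the Kato framework.

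Having verified (K1)--(K4), Kato's theorem yields a maximal time $T=T(u_0)>0$ and a unique solution $u\in C^{0}([0,T);H^{s}(\R))\cap C^{1}([0,T);H^{s-1}(\R))$ of the nonlocal problem (hence also of \eqref{1.0.2}), with continuous dependence on $u_0$. The independence of $T$ from $s$ follows from a standard persistence argument: applying Kato's theorem at a fixed regularity $s_0\in(3/2,s]$ gives a lifespan depending only on $\|u_0\|_{H^{s_0}}$, and a Gronwall-type estimate on $\|u(t,\cdot)\|_{H^{s}}$ along that solution shows the higher Sobolev norm cannot blow up strictly before the lower one, so the $H^{s}$-solution exists on the same interval.
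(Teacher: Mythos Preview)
Your proposal is correct and follows essentially the same route as the paper: rewrite \eqref{1.0.2} in the nonlocal form $u_t+uu_x=F(u)$ with $F(u)=-\lambda u-\partial_x\Lambda^{-2}(u^3-\tfrac12 u^2+\tfrac12 u_x^2)$, then verify Kato's hypotheses on the pair $(H^s,H^{s-1})$ and invoke \cite{kato}. The only difference is emphasis: the paper spells out the Lipschitz and boundedness properties of $F$ (your (K4)) in detail via Propositions~\ref{prop2.1}--\ref{prop2.2} and simply cites \cite{liu2011,mustafa,blanco} for the semigroup and commutator facts (your (K1)--(K3)), whereas you sketch the latter and treat (K4) as routine; both are legitimate, and your explicit mention of the Kato--Ponce estimate for (K3) and the persistence argument for the $s$-independence of $T$ fills in points the paper leaves implicit.
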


Once we have characterised conditions for having locally well-posed solutions, a follow-up question is to look for the existence or not of singularities. It is well-known that the existence of some $k>0$ such that $u_x>-k$, where $u$ is a solution of the Camassa-Holm, prevent the formation of blow-up phenomenon for that equation, see \cite{const1998-1,const1998-2,const1998-3,escher}. Our next result shows that equation \eqref{1.0.2} shares the same property with the Camassa-Holm equation.

\begin{theorem}\label{teo1.2}
Let $u$ be a solution of \eqref{1.0.2} with initial data $u(0,x)=u_0(x)$ and $m_0=u_0(x)-u_0''(x)$. Assume that $m_0\in H^1(\R)$ and there exists a positive constant $k$ such that $u_x>-k$. Then there exists a positive constant $\kappa$ such that
$\|u\|_{H^3(\R)}\leq e^{\kappa t}\,\|m_0\|_{H^1(\R)}$. In particular, $u$ does not blow-up in finite time. 
\end{theorem}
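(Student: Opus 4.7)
The plan is to show that $\|m(t)\|_{H^{1}(\R)}$, with $m=u-u_{xx}$, grows at most exponentially, and then to translate this into the desired $H^{3}$ bound for $u$ via the isomorphism $1-\partial_{x}^{2}\colon H^{3}(\R)\to H^{1}(\R)$, which gives $\|u\|_{H^{3}(\R)}\sim\|m\|_{H^{1}(\R)}$. Substituting $u_{xx}=u-m$ and $u_{xxx}=u_{x}-m_{x}$ recasts \eqref{1.0.2} as the transport-type equation
\begin{equation*}
m_{t}+um_{x}+2u_{x}m+\lambda m = 3uu_{x}(1-u),
\end{equation*}
which is the workhorse of the argument. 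As preliminary input, pairing \eqref{1.0.2} with $u$ and integrating by parts yields the $H^{1}$ energy law $\tfrac{1}{2}\tfrac{d}{dt}\|u\|_{H^{1}}^{2}+\lambda\|u\|_{H^{1}}^{2}=0$ (which should coincide with one of the conservation currents obtained earlier in the paper), so that $\|u(t)\|_{H^{1}(\R)}\le e^{|\lambda|t}\|u_{0}\|_{H^{1}(\R)}$ is bounded on any finite interval and Sobolev embedding supplies an a priori bound for $\|u(t)\|_{L^{\infty}(\R)}$ as well.

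Next I would pair the $m$-equation with $m$ and integrate by parts to obtain
\begin{equation*}
\tfrac{1}{2}\tfrac{d}{dt}\|m\|_{L^{2}}^{2}= -\tfrac{3}{2}\int_{\R} u_{x}m^{2}\,dx-\lambda\|m\|_{L^{2}}^{2}+3\int_{\R} u(1-u)u_{x}m\,dx.
\end{equation*}
The hypothesis $u_{x}>-k$ dominates the first term by $\tfrac{3k}{2}\|m\|_{L^{2}}^{2}$, while the last term is handled by Cauchy--Schwarz using the $L^{\infty}$ and $H^{1}$ bounds on $u$ just obtained, producing a Gronwall-type inequality that yields an exponential-in-$t$ bound for $\|m(t)\|_{L^{2}(\R)}$. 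The crucial by-product is the representation $u=p\ast m$ with $p(x)=\tfrac{1}{2}e^{-|x|}$: Young's inequality then gives $\|u_{x}\|_{L^{\infty}(\R)}\le\|p_{x}\|_{L^{2}(\R)}\|m\|_{L^{2}(\R)}=\tfrac{1}{2}\|m\|_{L^{2}(\R)}$, so that $\|u_{x}(t)\|_{L^{\infty}(\R)}$ becomes an a priori controlled quantity too.

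For $\|m_{x}\|_{L^{2}(\R)}$ I would differentiate the $m$-equation once in $x$, multiply by $m_{x}$ and integrate by parts. The three highest-order contributions combine into $-\tfrac{5}{2}\int u_{x}m_{x}^{2}\,dx$ (which $u_{x}>-k$ bounds above by $\tfrac{5k}{2}\|m_{x}\|_{L^{2}}^{2}$), together with the commutator $2\int u_{xx}mm_{x}\,dx=-\int u_{xxx}m^{2}\,dx=-\int u_{x}m^{2}\,dx$, using $u_{xxx}=u_{x}-m_{x}$ and $\int m_{x}m^{2}\,dx=0$. This last piece is the main obstacle: the one-sided hypothesis $u_{x}>-k$ does not control $\int u_{x}m^{2}$ from above, and it is precisely here that the bootstrap $\|u_{x}\|_{L^{\infty}}\le\tfrac{1}{2}\|m\|_{L^{2}}$ from the previous step is essential, delivering $\bigl|\int u_{x}m^{2}\bigr|\le\tfrac{1}{2}\|m\|_{L^{2}(\R)}^{3}$, which is already under control. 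The remaining forcing terms are polynomial in $u$, $u_{x}$, and $u_{xx}=u-m$, and are estimated by the same mixture of $L^{\infty}$ and $L^{2}$ bounds, so that adding the two differential inequalities produces $\tfrac{d}{dt}\|m\|_{H^{1}(\R)}^{2}\le\kappa\bigl(1+\|m\|_{H^{1}(\R)}^{2}\bigr)$ for some $\kappa$ depending on $k$, $\lambda$, and $\|u_{0}\|_{H^{1}(\R)}$. A final Gronwall step combined with the equivalence $\|u\|_{H^{3}}\sim\|m\|_{H^{1}}$ then produces the stated exponential bound and immediately rules out finite-time blow-up.
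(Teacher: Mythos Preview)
Your argument is correct and reaches the same conclusion, but it takes a detour that the paper avoids. The key difference is in how the cross term $\int_\R u_x m^2\,dx$ is handled. You estimate $\|m\|_{L^2}$ and $\|m_x\|_{L^2}$ sequentially; in the $\dot H^1$ step the term appears with sign $+\int u_x m^2$, which (as you correctly observe) the one-sided hypothesis $u_x>-k$ does not control, so you invoke the bootstrap $\|u_x\|_{L^\infty}\le\tfrac12\|m\|_{L^2}$ from the earlier step. The paper instead estimates $\tfrac{d}{dt}\|m\|_{H^1}^2$ in a single stroke: the $L^2$ pairing contributes $-\tfrac32\int u_x m^2$ while the $\dot H^1$ pairing contributes $+\int u_x m^2$, and their \emph{sum} is $-\tfrac12\int u_x m^2$, which \emph{is} directly bounded above by $\tfrac{k}{2}\|m\|_{L^2}^2$ using only $u_x>-k$. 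This produces a genuinely linear Gronwall inequality $\tfrac{d}{dt}\|m\|_{H^1}^2\le \kappa\|m\|_{H^1}^2$ with no cubic term and no bootstrap, and hence a rate $\kappa$ depending only on $k,\lambda,\|u_0\|_{H^1}$; along your route the final rate would tacitly also depend on $\|m_0\|_{L^2}$ through the cubic bound $|\int u_x m^2|\le\tfrac12\|m\|_{L^2}^3$. A smaller stylistic difference: you treat the forcing $3uu_x(1-u)$ and its $x$-derivative by direct $L^\infty$--$L^2$ estimates, whereas the paper packages it as $-\partial_x h(u)$ with $h(u)=u^3-\tfrac32 u^2$ and invokes the Moser-type composition estimate (Lemma~\ref{lema2.3}) to obtain $\|h(u)\|_{H^3}\le c\|u\|_{H^3}=c\|m\|_{H^1}$ in one line.
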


Theorem \ref{teo1.2} says that the lower limitation of the $x-$derivative is enough to avoid the development of blow-up phenomenon. However, we want to make two important observations:
\begin{itemize}
    \item Theorem \ref{teo1.2} does not give conditions to guarantee when the $x-$derivatives of the solutions of \eqref{1.0.2} are bounded from below;
    \item it does not necessarily bring much clarity about the formation of singularities.
\end{itemize}

While at first sight the comments above are somewhat frustrating, Theorem \ref{teo1.2} indicates a direction to investigate a bit more the blow-up scenario of solutions of the Cauchy problem \eqref{1.0.2}: We should look for solutions of \eqref{1.0.2} satisfying the condition
\bb\label{1.1.1}\lim\sup\limits_{t\rightarrow T}\left(\sup_{x\in\R}|u_x(t,x)|\right)=\infty,
\ee
where $T$ is the maximal time of existence assured by Theorem \ref{teo1.1}. Moreover, the same theorem, jointly with the Sobolev Embedding Theorem, implies that the local solutions of \eqref{1.0.2} have their shape bounded, that is
\bb\label{1.1.2}
\sup_{(t,x)\in[0,T)\times\R}|u(t,x)|<
\infty.
\ee

Therefore, the comments above allude that a blow-up manifestation, if it occurs, is in the form of wave breaking, as expressed by \eqref{1.1.1}. Conditions for its occurrence can now be given.

\begin{theorem}\label{teo1.3}
Given $0\not\equiv u_0 \in H^{3}(\R)$, let $u=u(t,x)$ be the corresponding solution of \eqref{1.0.2} assured by Theorem \ref{teo1.1}. Let
$$
\epsilon_0:=1-\f{2\sqrt{2}\|u_0\|_{H^1(\R)}^3+\|u_0\|_{H^1(\R)}^2}{2(u_0'(x_0))^2},
$$
$y(t):=\inf\limits_{x\in\R}u_x(t,x)$ and
$$
\lambda_0:=-\epsilon_0\f{y(0)}{4}.
$$
If 
\begin{enumerate}
    \item there exists a point $x_0\in\R$ such that
\bb\label{1.1.3}
u_0'(x_0)<-\sqrt{\sqrt{2}\|u_0 \|_{H^1(\R)}^3+\frac{1}{2}\|u_0\|_{H^1(\R)}^2},
\ee
and
\item $\lambda\in[0,\lambda_0)$,
\end{enumerate} 
then $u$ breaks at finite time bounded from above by $T_+$, where
$$
T_+=\left\{\ba{l}
\ds{\f{1}{\lambda}\ln{\left(\f{\epsilon_0 y(0)}{\epsilon_0 y(0)+4\lambda}\right)},\quad \text{if}\quad\lambda>0},\\
\\
\ds{-\f{8}{\epsilon_0 y(0)},\quad \text{if}\quad\lambda=0}.
\ea\right.
$$
\end{theorem}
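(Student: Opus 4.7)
My plan is to adapt the Constantin--Escher wave-breaking argument to the dissipative term $\lambda m$. The first step is to rewrite \eqref{1.0.2} in nonlocal convolution form. Let $p(x)=\tfrac{1}{2}e^{-|x|}$ be the Green's function of $1-\partial_x^{2}$, so that $p_{xx}=p-\delta_0$ distributionally and $p\ast(u-u_{xx})=u$. Rearranging \eqref{1.0.2} and applying $p\ast$ yields
\[
u_t+uu_x+\lambda u=-\,p_x\ast\!\Bigl(u^3+\tfrac{u_x^{2}}{2}-\tfrac{u^2}{2}\Bigr),
\]
and then, differentiating in $x$ and using $p_{xx}\ast f=p\ast f-f$,
\[
u_{tx}+uu_{xx}+\tfrac{1}{2}u_x^{2}+\lambda u_x=-\tfrac{u^2}{2}+u^3+p\ast\!\Bigl(\tfrac{u^2-u_x^{2}}{2}-u^3\Bigr).
\]

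Next I will reduce to a scalar differential inequality. By a lemma of Constantin, $y(t):=\inf_{x\in\R}u_x(t,x)$ is absolutely continuous on $[0,T)$, and for a.e.\ $t$ there is a minimizer $\xi(t)$ with $u_{xx}(t,\xi(t))=0$ and $\dot y(t)=u_{tx}(t,\xi(t))$. Evaluating the preceding identity at $(t,\xi(t))$ kills the $u\,u_{xx}$ term and gives
\[
\dot y(t)\le -\tfrac{1}{2}y(t)^{2}-\lambda y(t)+R(t),\quad R(t):=\Bigl[p\ast\!\bigl(\tfrac{u^2-u_x^{2}}{2}-u^3\bigr)-\tfrac{u^2}{2}+u^3\Bigr]_{x=\xi(t)}.
\]
The crucial point is that $R$ contains no $u_x$ factor, so it can be controlled purely by $\|u\|_{H^1}$.

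The third step is to bound $R$. Multiplying \eqref{1.0.2} by $u$ and integrating, the nonlocal nonlinearities cancel after two integrations by parts, giving $\tfrac{d}{dt}\|u\|_{H^1}^{2}=-2\lambda\|u\|_{H^1}^{2}$, hence $\|u(t)\|_{H^1}\le\|u_0\|_{H^1}$ for $\lambda\ge 0$. Dropping the nonpositive pieces $-\tfrac{1}{2}p\ast u_x^{2}$ and $-\tfrac{u^2}{2}$ in $R$, and estimating what remains by Young's inequality ($\|p\|_{L^\infty}=1/2$) and the Sobolev embedding $\|u\|_{L^\infty}\le\|u\|_{H^1}/\sqrt{2}$, I expect an upper bound of the form $R(t)\le\tfrac{1}{2}\bigl(\sqrt{2}\,\|u_0\|_{H^1}^{3}+\tfrac{1}{2}\|u_0\|_{H^1}^{2}\bigr)$, which is exactly half the quantity in the numerator of $\epsilon_0$. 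Hypothesis (1) says $y(0)^{2}\ge(u_0'(x_0))^{2}$ strictly exceeds that full quantity, so $\dot y(0)<0$; a continuity/bootstrap argument keeps $y$ decreasing and $y^{2}$ nondecreasing on $[0,T)$, and the estimate upgrades to the Riccati form
\[
\dot y(t)\le -\tfrac{\epsilon_0}{4}\,y(t)^{2}-\lambda y(t).
\]

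The final step will be to integrate the comparison ODE $\dot z=-(\epsilon_0/4)z^{2}-\lambda z$ with $z(0)=y(0)<0$ by separation of variables and partial fractions. The assumption $\lambda<\lambda_0=-\epsilon_0 y(0)/4$ is precisely what keeps the denominator $\epsilon_0 y(0)+4\lambda$ strictly negative, so $z(t)\to-\infty$ at the finite time $T_+$ announced in the theorem; the case $\lambda=0$ follows either directly from $\dot z=-(\epsilon_0/4)z^{2}$ or as a limit $\lambda\to 0^{+}$. Comparison gives $y(t)\le z(t)$, so $y$ diverges to $-\infty$ by $T_+$, which together with \eqref{1.1.2} is wave breaking in the sense of \eqref{1.1.1}. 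The main obstacle is the bookkeeping in the third step: the minus sign in front of $p\ast u_x^{2}$ prevents me from using the clean sign-definite bound $p\ast(u^2+u_x^{2})\le\tfrac{1}{2}\|u\|_{H^1}^{2}$, so each piece of $R$ must be treated separately and the Sobolev constants $1/\sqrt{2}$ and $\|p\|_{L^\infty}=1/2$ must line up exactly with the numerical thresholds in \eqref{1.1.3} and in the definition of $\epsilon_0$.
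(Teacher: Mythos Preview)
Your outline is essentially the paper's own Constantin--Escher scheme: rewrite in nonlocal form, differentiate, track $y(t)=\inf_x u_x$ via the minimizer lemma, bound the residual by the $H^1$ energy, and integrate the resulting Riccati inequality with the substitution $z=e^{\lambda t}y$. The only notable variation is in the remainder estimate: where the paper groups $u^2+\tfrac{u_x^2}{2}$ and invokes the pointwise inequality $\Lambda^{-2}\bigl(u^2+\tfrac{u_x^2}{2}\bigr)\ge \tfrac{u^2}{2}$, you instead simply drop the nonpositive pieces $-\tfrac12 p\ast u_x^2$ and $-\tfrac12 u^2$ and bound $p\ast(u^2/2)$, $p\ast u^3$, $u^3$ separately---this is more elementary and (as you suspect) does land on the exact same threshold $\psi(u_0)=\tfrac{\sqrt 2}{2}\|u_0\|_{H^1}^3+\tfrac14\|u_0\|_{H^1}^2$.

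One thin spot: your claim ``$\dot y(0)<0$; a continuity/bootstrap argument keeps $y$ decreasing'' is not justified by hypothesis~(1) alone. From $\dot y\le -\tfrac12 y^2-\lambda y+\psi(u_0)$ and $\psi(u_0)<\tfrac12 y(0)^2$ you only get $\dot y(0)< -\lambda y(0)$, which is \emph{positive} when $y(0)<0$. You must already use $\lambda<\lambda_0=-\epsilon_0 y(0)/4$ here: then $\tfrac12 y(0)^2+\lambda y(0)>\tfrac{2-\epsilon_0}{4}y(0)^2>\tfrac{1-\epsilon_0}{2}y(0)^2\ge\psi(u_0)$, which both initiates and propagates the bootstrap. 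The paper sidesteps this by citing the Constantin--Escher fact $y(t)^2>(1-\epsilon/2)y(0)^2$ directly; your direct monotonicity argument works too, but make the role of $\lambda<\lambda_0$ explicit at that step.
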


We would like to observe that the condition \eqref{1.1.3} has two main implications:
\begin{itemize}
    \item Firstly, $y(0)<0$ and, therefore, $\lambda_0>0$;
    \item secondly, $\epsilon_0\in(0,1)$.
\end{itemize} 

To prove Theorem \ref{teo1.3} we follow the steps of \cite{igor-jde}, that is, we use a small parameter $\epsilon>0$ limited from above by $\epsilon_0$. Next, given the initial data $u_0$, if $\lambda\in[0,\lambda_0)$, then we can assure the wave breaking. Moreover, the restriction on the values of $\lambda$ that may lead to blow-up of solutions enables us to interpret the term $\lambda(u-u_{xx})$ in \eqref{1.0.2} as a perturbation of the equation \eqref{1.0.1}. 

We observe that the presence of the cubic and quartic non-linearities in \eqref{1.0.2} does not allow the reduction of the analysis of \eqref{1.0.2} to \eqref{1.0.1}, as it is possible for some Camassa-Holm type equations \cite{lenjde2013}. For further details about this matter, see the discussion about it in \cite{igor-jde}.

\subsection{Organisation of the paper} Theorem \ref{teo1.1}'s proof is done by showing that the Cauchy problem \eqref{1.0.2} satisfies certain conditions required by Kato \cite{kato} for proving the local existence of abstract evolution equations in Banach spaces. Then, in Section \ref{sec2} we present an overview of Kato's approach and show that the Cauchy problem \eqref{1.0.2} satisfies the conditions required by Kato's theory.

Theorems \ref{teo1.2} and \ref{teo1.3} are proved in Section \ref{sec4}. While the demonstration of Theorem \ref{teo1.2} can be done without further qualitative information of the solutions of \eqref{1.0.2}, the proof of Theorem \ref{teo1.3} requires some invariants of the solutions, namely, {\it sine qua non} ingredients to prove it in our framework are certain quantities conserved along the solutions. These quantities, better known as {\it conserved quantities}, bring a new problem to us: How can we find them? To overcome this issue, in Section \ref{sec3} we derive some invariants for the equation:
\begin{itemize}
    \item We firstly construct the invariance group of the equation, but not necessarily of the Cauchy problem. We prove that the only group of diffeomorphisms acting on $\R^3$ and leaving invariant the solutions of the equation are formed by translations in the independent variables. As a consequence, the only invariant solutions of the equation having dependence on both $t$ and $x$ are travelling waves;
    \item we construct conserved currents for the equation, which is a pair $(C^0,C^1)$ such that its divergence, when taken on the solutions of the equation, vanishes identically. As a consequence, the integral of $C^0$ over the whole domain is invariant with respect to $t$ under mild conditions on the behaviour of the solutions on the boundary (here we understand $\pm\infty$ as boundary for unbounded  domains). The invariants found in Section \ref{sec3} will be of vital importance to prove the existence of wave breaking of solutions for \eqref{1.0.2}.
\end{itemize}

From the conserved currents found and the fact that the most general solution of \eqref{1.0.2} preserving the invariance group are the travelling waves, we derive some quantities used in \cite{kelly,natali} for proving their results regarding the (in)stability of solutions.

We observe that while the steps used to prove theorems \ref{teo1.1}--\ref{teo1.3} follow the ideas presented in \cite{const1998-1,const1998-2,const1998-3,escher}, our way to construct the ingredients needed to prove Theorem \ref{teo1.3} is rather different of the usual. Actually, in order to construct invariants required to establish qualitative information for the solutions, we borrow from group-analysis \cite{bk,chev-2007,chev-2010-1,chev-2010-2,chev-2014,chev-2017,ibrabook,olverbook} the machinery to construct conserved currents and, therefore, the conserved quantities emerge as an extremely natural consequence of them.

\section{Existence and uniqueness of solutions}\label{sec2}

Here we deal with the problem of existence and uniqueness of solutions of \eqref{1.0.2} at the local level. Our main tool for establishing such result is Kato's theory \cite{kato}. To accomplish our goal, we recall some known results from the literature and next we prove the local well-posedness. Before, we introduce the notation and basic concepts used in our work.

\subsection{Notation and conventions} 

If $X$ and $H$ are a Banach and Hilbert spaces, their corresponding norm and inner product are denoted, respectively, by $\|\cdot\|_X$ and $\langle \cdot,\cdot\rangle_{H}$. We denote the usual (Hilbert) Sobolev space by $H^s(\mathbb{\R})$, for each $s\in\R$. Given two functions $f$ and $g$, their convolution is denoted by $f\ast g$. If $u=u(t,x)$, we denote by $u_0(x)$ the function $x\mapsto u(0,x)$, $m=u-u_{xx}$ and $m_0=u_0-u_0''$. Note that $m=(1-\p_x^2)u$ and then $u=g\ast m$, where $g(x)=e^{-|x|}/2$. For each $s\in\R$, $\p_x\in {\cal L}(H^{s}(\R), H^{s-1}(\R))$, where $u\mapsto u_x$, that is, $\p_x$ is a linear and bounded (and continuous) operator from $H^s(\R)$ into $H^{s-1}(\R)$. Moreover, if $s$ and $t$ are real numbers such that $s\geq t$, then ${\cal S}(\R)\subseteq H^{s}(\R)\subseteq H^t(\R)\subseteq{\cal S}'(\R)$, where ${\cal S}(\R)$ denotes the Schwartz space while ${\cal S}'(\R)$ is its topological dual space. We, indeed, shall use the estimates $\Vert\partial_x f \Vert_{H^{s-1}(\R)}\leq \Vert f\Vert_{H^{s}(\R)}$, 
$
\Vert \Lambda^{-2}f \Vert_{H^s(\R)}\leq \Vert f\Vert_{H^{s-2}(\R)}
$
and 
$\Vert \partial_x\Lambda^{-2}f \Vert_{H^s(\R)}\leq \Vert f\Vert_{H^{s-1}(\R)}
$.

\subsection{Auxiliary results}

We firstly recall that the embedding $H^{s}(\R)\hookrightarrow H^t(\R)$ is continuous and dense when $s\geq t$. A very useful result is:
\begin{lemma}\textsc{[Sobolev Embedding Theorem]}\label{lema2.4}
If $s>1/2$ and $u\in H^{s}(\R)$, then $u$ is bounded and continuous.
\end{lemma}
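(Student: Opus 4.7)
The plan is to work on the Fourier side, exploiting the definition $\|u\|_{H^s(\mathbb{R})}^2=\int_{\mathbb{R}}(1+|\xi|^2)^s|\widehat{u}(\xi)|^2\,d\xi$, and to reduce the statement to the well-known fact that the inverse Fourier transform of an $L^1$ function is bounded and continuous. The key intermediate claim is that, under the hypothesis $s>1/2$, one has $\widehat{u}\in L^1(\mathbb{R})$ whenever $u\in H^s(\mathbb{R})$.

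The first step is the elementary split
\[
|\widehat{u}(\xi)|=(1+|\xi|^2)^{-s/2}\cdot (1+|\xi|^2)^{s/2}|\widehat{u}(\xi)|,
\]
followed by the Cauchy--Schwarz inequality to obtain
\[
\|\widehat{u}\|_{L^1(\mathbb{R})}\leq\left(\int_{\mathbb{R}}(1+|\xi|^2)^{-s}\,d\xi\right)^{1/2}\|u\|_{H^s(\mathbb{R})}.
\]
A direct comparison with the integral $\int_{|\xi|\geq 1}|\xi|^{-2s}\,d\xi$ shows that the weight integral is finite precisely when $2s>1$, which is exactly the standing hypothesis. Hence $\widehat{u}\in L^1(\mathbb{R})$ with an explicit bound.

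In the second step I would invert the Fourier transform: the formula $u(x)=(2\pi)^{-1}\int_{\mathbb{R}}e^{ix\xi}\widehat{u}(\xi)\,d\xi$ holds pointwise after possibly redefining $u$ on a set of measure zero (since $u\in L^2$ and $\widehat{u}\in L^1$, both sides agree a.e.\ and the right-hand side is a genuine continuous function). Boundedness is immediate from
\[
|u(x)|\leq\frac{1}{2\pi}\|\widehat{u}\|_{L^1(\mathbb{R})},
\]
and continuity follows from dominated convergence applied to $x\mapsto e^{ix\xi}\widehat{u}(\xi)$ with dominating function $|\widehat{u}(\xi)|\in L^1(\mathbb{R})$.

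The argument is essentially standard, so I do not expect a serious obstacle; the only genuinely nontrivial point is verifying the convergence of the weight integral $\int(1+|\xi|^2)^{-s}d\xi$ precisely at the threshold $s>1/2$, which pins down why the assumption on $s$ is sharp. Everything else is Cauchy--Schwarz plus the mapping properties of the Fourier transform, both of which are part of the standard toolkit already being used in the paper's Kato-type analysis.
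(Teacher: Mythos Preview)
Your argument is correct and is precisely the standard textbook proof of the one-dimensional Sobolev embedding $H^s(\R)\hookrightarrow C_b(\R)$ for $s>1/2$. The paper itself does not give a proof at all: it simply refers the reader to \cite{linares,taylor}, so your write-up is in fact more detailed than what appears in the paper, and matches the argument one finds in those references.
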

\begin{proof}
See \cite{linares,taylor}, pages 47 and 317, respectively.
\end{proof}

The Sobolev Embedding Theorem assures that $\|u\|_{L^\infty(\R)}\leq c\|u\|_{H^s(\R)}$, for any $s>1/2$.

\begin{lemma}\label{lema2.2}
Let $F$ and $f$ be functions such that $F\in C^\infty(\R)$, $F(0)=0$, and $f\in H^{s}(\R)$, with $s>1/2$. Then $F(f)\in H^s(\R)$.
\end{lemma}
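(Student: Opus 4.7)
My plan is to reduce the statement to two standard facts about $H^s(\R)$ that are available because $s>1/2$: the Sobolev embedding $H^s(\R)\hookrightarrow L^\infty(\R)$ already recorded as Lemma \ref{lema2.4}, and the Moser-type product estimate $\|fg\|_{H^s(\R)}\le C\bigl(\|f\|_{L^\infty(\R)}\|g\|_{H^s(\R)}+\|f\|_{H^s(\R)}\|g\|_{L^\infty(\R)}\bigr)$, which makes $H^s(\R)\cap L^\infty(\R)$ a Banach algebra and, once the embedding is invoked, simplifies to $\|fg\|_{H^s(\R)}\le C\|f\|_{H^s(\R)}\|g\|_{H^s(\R)}$.

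The first step is a preparatory one. From Lemma \ref{lema2.4} I set $M:=\|f\|_{L^\infty(\R)}\le c\|f\|_{H^s(\R)}<\infty$, so the image of $f$ lies in the compact interval $[-M,M]$; since $F\in C^\infty(\R)$, each $\sup_{|y|\le M}|F^{(j)}(y)|$ is finite and hence $F^{(j)}(f)\in L^\infty(\R)$ for every $j\ge 0$.

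Next I establish the result for non-negative integer exponents $s=k$ by induction on $k$. The base case $k=0$ uses the hypothesis $F(0)=0$: the mean value theorem gives $|F(f(x))|\le\bigl(\sup_{|y|\le M}|F'(y)|\bigr)|f(x)|$ pointwise, so $\|F(f)\|_{L^2(\R)}\le C(M)\|f\|_{L^2(\R)}$. For the inductive step, Faà di Bruno's formula expresses $\p_x^k F(f)$ as a finite sum of monomials of the form $F^{(j)}(f)\prod_{i=1}^{r}\p_x^{k_i}f$ with $k_i\ge 1$, $\sum k_i=k$ and $j\le k$; the factor $F^{(j)}(f)$ is bounded by the preparatory step, each $\p_x^{k_i}f$ lies in $H^{s-k_i}(\R)$ and so in $L^2(\R)$, and iterated application of the Moser-type product estimate places every such monomial in $L^2(\R)$.

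For general $s>1/2$ I write $s=k+\sigma$ with $k\in\{0,1,2,\dots\}$ and $\sigma\in[0,1)$, and bootstrap the integer case to fractional $\sigma$ either through real interpolation between $H^k(\R)$ and $H^{k+1}(\R)$ applied to the nonlinear map $f\mapsto F(f)$, or through Bony's paradifferential decomposition applied to the identity $F(f)=f\cdot G(f)$ where $G(y):=\int_0^1 F'(ty)\,dt$. The main obstacle is exactly this fractional regime: the integer case is a clean consequence of Faà di Bruno and the algebra property, whereas the passage to non-integer $s$ requires either verifying the boundedness of $f\mapsto F(f)$ between integer endpoints before interpolating, or invoking Littlewood–Paley machinery to control the paraproducts. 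In practice I would exhibit the integer case in detail and appeal to a standard reference for the full Moser composition estimate in fractional Sobolev spaces.
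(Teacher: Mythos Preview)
The paper does not actually prove this lemma; it simply cites Constantin--Molinet, so there is no argument in the paper to match against. Your sketch is therefore strictly more substantive, and the integer-order part is correct: Sobolev embedding bounds $f$ and hence each $F^{(j)}(f)$ in $L^\infty$, and Fa\`a di Bruno together with the algebra/Moser product estimate handles all derivatives up to integer order $k$.

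Where you should be more careful is the passage to non-integer $s$. Real interpolation between $H^k$ and $H^{k+1}$ does not apply out of the box to the nonlinear map $f\mapsto F(f)$; standard interpolation theorems are for linear (or at least sublinear) operators, and making nonlinear interpolation work here requires uniform Lipschitz-type bounds on balls plus a result of Peetre/Tartar type, which is not free. Your alternative, writing $F(f)=f\cdot G(f)$ with $G(y)=\int_0^1 F'(ty)\,dt$, also does not close as stated: $G(0)=F'(0)$ need not vanish, so $G(f)$ is not in $H^s(\R)$ in general, and neither the algebra estimate nor the Moser product estimate lets you multiply $f\in H^s$ by a factor that is merely in $L^\infty$. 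The correct paradifferential route is Meyer's paralinearization $F(f)=T_{F'(f)}f+R(f)$, where $T$ is Bony's paraproduct and the remainder $R(f)$ is smoother; this is the content of the standard Moser-type composition theorem in $H^s$. Since you already plan to cite a reference for this step---which is exactly what the paper does for the entire lemma---your overall strategy is fine, but you should phrase the fractional step as an appeal to Meyer's composition estimate rather than to interpolation or to the $f\cdot G(f)$ factorization.
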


\begin{proof}
See \cite{const-mol}, proof of Lemma 1 and references thereof.
\end{proof}

\begin{lemma}\label{lema2.3}
Let $F$ and $f$ be functions such that $F\in C^\infty(\R)$, $F(0)=0$, and $f\in H^{s}(\R)$, with $s>1/2$. If, for some $t>1/2$, the $H^t(\R)$ norm of $f$ is bounded from above by $R>0$, that is $\|f\|_{H^t(\R)}\leq R$, then there exists a positive constant $c$ depending only on $R$ such that $F(f)\in H^s(\R)$ and $\|F(f)\|_{H^s(\R)}\leq c\|f\|_{H^s(\R)}$.
\end{lemma}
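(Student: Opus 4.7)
The plan is to build on Lemma \ref{lema2.2}, which already yields $F(f)\in H^s(\R)$, and focus on establishing the quantitative linear bound with constant depending only on $R$. The main tool is a Moser-type composition estimate: for $s\geq 0$ and $F\in C^\infty(\R)$ with $F(0)=0$,
$$
\|F(f)\|_{H^s(\R)}\leq c\bigl(\|f\|_{L^\infty(\R)}\bigr)\,\|f\|_{H^s(\R)},
$$
where $c(\cdot)$ is a nondecreasing function depending only on $F$. This is a classical tame estimate; see, for instance, \cite{taylor} or the references cited for Lemma \ref{lema2.2}.

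Given this estimate, the conclusion is immediate. By the Sobolev embedding (Lemma \ref{lema2.4}), since $t>1/2$, there is an absolute constant $c_0>0$ such that
$$
\|f\|_{L^\infty(\R)}\leq c_0\,\|f\|_{H^t(\R)}\leq c_0\,R.
$$
Using monotonicity of $c(\cdot)$ and plugging into the composition estimate,
$$
\|F(f)\|_{H^s(\R)}\leq c(c_0 R)\,\|f\|_{H^s(\R)},
$$
and setting the constant in the statement to be $c(c_0R)$, which depends only on $R$ (and on the fixed function $F$), we obtain the claim.

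The main obstacle is justifying the Moser composition estimate itself, whose proof is non-trivial from scratch. The standard route proceeds either by Littlewood--Paley (paradifferential) decomposition, or alternatively by Taylor factorization $F(y)=y\,G(y)$ with $G\in C^\infty(\R)$ (legitimate because $F(0)=0$, with $G(y)=\int_0^1 F'(ty)\,dt$), combined with the algebra property of $H^s(\R)$ for $s>1/2$ and the tame product inequality
$$
\|fg\|_{H^s(\R)}\leq C\bigl(\|f\|_{L^\infty(\R)}\|g\|_{H^s(\R)}+\|g\|_{L^\infty(\R)}\|f\|_{H^s(\R)}\bigr).
$$
An induction on the order $s$ then controls the $H^s$-norm of $G(f)$ by the $L^\infty$-norm of $f$ only. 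Since Lemma \ref{lema2.2} already appeals to the Constantin--Molinet framework, the same reference supplies the quantitative refinement needed here with essentially no additional effort.
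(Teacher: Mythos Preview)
Your argument is correct: the Moser-type composition estimate $\|F(f)\|_{H^s(\R)}\leq c(\|f\|_{L^\infty(\R)})\|f\|_{H^s(\R)}$ combined with the Sobolev embedding $\|f\|_{L^\infty(\R)}\leq c_0\|f\|_{H^t(\R)}\leq c_0R$ yields exactly the claimed bound with a constant depending only on $R$ (and on $F$). The paper does not give an in-house proof of this lemma at all; it simply refers to \cite{silva2019}, Lemma~2.1, and \cite{igor-jde}, and those references implement precisely the Moser/tame-composition route you outline. So your approach is not genuinely different---it is the standard argument behind the cited results, made explicit.

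One small caveat on your closing paragraph: the sentence ``an induction on the order $s$ then controls the $H^s$-norm of $G(f)$ by the $L^\infty$-norm of $f$ only'' is slightly misleading as written, since $\|G(f)\|_{H^s(\R)}$ will in general involve $\|f\|_{H^s(\R)}$ as well. What the induction (or the paraproduct decomposition) actually gives is the full tame bound $\|F(f)\|_{H^s(\R)}\leq c(\|f\|_{L^\infty(\R)})\|f\|_{H^s(\R)}$ directly, not a pure $L^\infty$ control of $G(f)$ in $H^s$. This does not affect the validity of your main argument, which only uses the Moser estimate as a black box.
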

\begin{proof}
See \cite{silva2019}, Lemma 2.1, or \cite{igor-jde}, claim in the Theorem 4.1.
\end{proof}

\begin{lemma}\label{lema2.4}\textsc{[Algebra property]}
For $s>1/2$, there is a constant $c>0$ such that $\Vert fg \Vert_{H^s(\R)}\leq c\Vert f\Vert_{H^s(\R)}\Vert g\Vert_{H^s(\R)}$.
\end{lemma}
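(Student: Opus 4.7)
The plan is to pass to the Fourier side, where $H^s(\mathbb{R})$ is characterised by the weight $\langle\xi\rangle^s:=(1+|\xi|^2)^{s/2}$, use a Peetre-type inequality to distribute the weight over a convolution, and finally use that $\langle\xi\rangle^{-s}\in L^2(\mathbb{R})$ exactly when $s>1/2$. Explicitly, since $\widehat{fg}=\hat f *\hat g$ up to a fixed constant absorbed into $c$, we have
\[
\|fg\|_{H^s(\mathbb{R})}^2 = \int_{\mathbb{R}}\langle\xi\rangle^{2s}\,\bigl|(\hat f * \hat g)(\xi)\bigr|^2\,d\xi.
\]
So the task reduces to bounding $\|\langle\cdot\rangle^s(\hat f*\hat g)\|_{L^2(\mathbb{R})}$ by $\|f\|_{H^s}\|g\|_{H^s}$.

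The first step is the elementary inequality $\langle\xi\rangle^s\leq 2^{s/2}\bigl(\langle\xi-\eta\rangle^s+\langle\eta\rangle^s\bigr)$, valid for every $s\geq 0$, which is obtained from $1+|\xi|^2\leq 2(1+|\xi-\eta|^2)+2(1+|\eta|^2)$ and monotonicity of $t\mapsto t^{s/2}$. Applying it under the convolution integral and splitting the result, I get
\[
\langle\xi\rangle^s\bigl|(\hat f*\hat g)(\xi)\bigr|\leq 2^{s/2}\Bigl[\bigl((\langle\cdot\rangle^s|\hat f|)*|\hat g|\bigr)(\xi) + \bigl(|\hat f|*(\langle\cdot\rangle^s|\hat g|)\bigr)(\xi)\Bigr].
\]

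The second step is Young's convolution inequality $\|h*k\|_{L^2}\leq\|h\|_{L^2}\|k\|_{L^1}$, which applied to both terms yields
\[
\|\langle\cdot\rangle^s\,\widehat{fg}\,\|_{L^2}\leq 2^{s/2}\Bigl(\|\langle\cdot\rangle^s\hat f\|_{L^2}\,\|\hat g\|_{L^1}+\|\hat f\|_{L^1}\,\|\langle\cdot\rangle^s\hat g\|_{L^2}\Bigr).
\]
The third step, where the hypothesis $s>1/2$ enters decisively, is to control the $L^1$ norms of $\hat f$ and $\hat g$. Writing $|\hat g(\eta)|=\langle\eta\rangle^{-s}\cdot\langle\eta\rangle^s|\hat g(\eta)|$ and applying Cauchy--Schwarz,
\[
\|\hat g\|_{L^1}\leq\Bigl(\int_{\mathbb{R}}\langle\eta\rangle^{-2s}\,d\eta\Bigr)^{1/2}\|g\|_{H^s(\mathbb{R})}=C_s\,\|g\|_{H^s(\mathbb{R})},
\]
and the integral $\int\langle\eta\rangle^{-2s}d\eta$ is finite precisely because $2s>1$. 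The same estimate applies to $\hat f$. Combining these bounds with the previous display and noting $\|\langle\cdot\rangle^s\hat f\|_{L^2}=\|f\|_{H^s}$, one obtains $\|fg\|_{H^s(\mathbb{R})}\leq c\,\|f\|_{H^s(\mathbb{R})}\|g\|_{H^s(\mathbb{R})}$ with $c=2^{s/2+1}C_s$, finishing the proof.

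The only subtle point, and the one I would be most careful about, is the third step: the use of Cauchy--Schwarz to trade the $L^1$ norm of $\hat g$ for its $H^s$ norm. This is where the assumption $s>1/2$ is necessary and cannot be removed, and it is also the step that makes the constant $c$ depend on $s$ in a controlled way (through $C_s=\|\langle\cdot\rangle^{-s}\|_{L^2}$). No deeper obstacle arises; both the Peetre inequality and Young's inequality are entirely routine.
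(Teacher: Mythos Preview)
Your proof is correct. The paper itself does not prove this lemma at all: its ``proof'' consists only of citations to Linares--Ponce and to Taylor. The Fourier-side argument you supply---Peetre-type splitting of $\langle\xi\rangle^s$, Young's convolution inequality, and Cauchy--Schwarz against $\langle\cdot\rangle^{-s}\in L^2(\R)$ (finite exactly when $s>1/2$)---is the standard route those references take, so in substance you have reproduced what the paper defers to.

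One cosmetic point: your justification of $\langle\xi\rangle^s\le 2^{s/2}\bigl(\langle\xi-\eta\rangle^s+\langle\eta\rangle^s\bigr)$ via ``monotonicity of $t\mapsto t^{s/2}$'' really uses subadditivity of $t\mapsto t^{s/2}$, which holds only for $0\le s\le 2$. For $s>2$ one instead uses convexity, and the constant becomes $2^{s-1}$. This affects only the explicit value of $c$, not the validity of the argument.
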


\begin{proof}
See \cite{linares} or \cite{taylor}, page 51 or exercise 6 on page 320, respectively.
\end{proof}

\begin{lemma}\label{lema2.5}\textsc{(Negative Multiplier)} If $s>1/2$, then
$
\Vert fg \Vert_{H^{s-1}}\leq c\Vert f\Vert_{H^{s}}\Vert g \Vert_{H^{s-1}}
$, for some $c>0$.
\end{lemma}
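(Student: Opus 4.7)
The plan is to regard the inequality as a boundedness statement for the multiplication operator $M_f : g \mapsto fg$, and to derive it by duality and interpolation from the case already treated in Lemma \ref{lema2.4}. Indeed, the algebra property gives $\|M_f g\|_{H^s(\R)} \leq c\|f\|_{H^s(\R)}\|g\|_{H^s(\R)}$, so $M_f$ is a bounded linear operator on $H^s(\R)$ with operator norm at most $c\|f\|_{H^s(\R)}$.

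Next I would pass to the dual endpoint. Because $M_f$ is formally self-adjoint for the $L^2$-pairing that identifies $H^{-s}(\R)$ with the topological dual of $H^s(\R)$, the algebra bound transfers to $M_f$ as a bounded operator on $H^{-s}(\R)$ with the same norm. Complex interpolation for the Bessel-potential scale then furnishes $M_f \in \mathcal{L}(H^{\sigma}(\R))$ for every $\sigma \in [-s,s]$, still with operator norm controlled by $c\|f\|_{H^s(\R)}$. The hypothesis $s > 1/2$ is precisely what ensures $s-1 \in (-s,s)$, so taking $\sigma = s-1$ yields the claimed inequality.

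The main obstacle, should one prefer a self-contained argument that avoids interpolation theory, is to bound $(\hat f \ast \hat g)(\xi)$ directly on the Fourier side. In that alternative route one invokes Peetre's inequality
$$
(1+|\xi|^2)^{s-1} \leq 2^{|s-1|}(1+|\eta|^2)^{s-1}(1+|\xi-\eta|^2)^{|s-1|},
$$
and exploits $|s-1|\leq s$ (which is exactly the content of $s \geq 1/2$) to absorb the $(1+|\xi-\eta|^2)^{|s-1|}$ factor into an $H^s$-weight on $f$; the remaining convolution is then handled by splitting the integral over the regions $\{|\xi-\eta| \leq |\eta|\}$ and $\{|\xi-\eta| > |\eta|\}$ and applying Cauchy--Schwarz to each piece. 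The bookkeeping in that direct approach, rather than any deep inequality, is where the real effort lies.
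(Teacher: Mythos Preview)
Your argument via duality and complex interpolation is correct: the algebra property gives $M_f\in\mathcal{L}(H^s)$, self-adjointness with respect to the $L^2$-pairing transfers this to $M_f\in\mathcal{L}(H^{-s})$, and interpolation then covers $H^{s-1}$ because $s>1/2$ forces $s-1\in(-s,s)$. The alternative Fourier-side sketch via Peetre's inequality is also a viable route, though as you note the bookkeeping is heavier.

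By way of comparison, the paper itself does not supply a proof at all: it simply cites Lemma~A1 of Kato~\cite{kato}. Your duality--interpolation argument is in fact the standard mechanism behind results of this type (and is essentially how such multiplier estimates are established in the literature), so you have effectively reconstructed what the reference contains. The advantage of your write-up is that it makes the dependence on the hypothesis $s>1/2$ transparent---it is needed both for the algebra property at the endpoints and to place $s-1$ inside the interpolation interval---whereas the paper's bare citation leaves that opaque.
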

\begin{proof}
See Lemma A1 in \cite{kato}.
\end{proof}

\subsection{Technical results}

In what follows we assume that $s>3/2$. Let $B:=B(0;R)\subseteq H^{s}(\R)$ be a ball (which we may assume to be closed) with centre at $0$ and radius $R>0$. Since $H^s(\R)$ is continuously and densely embedded in $H^{s-1}(\R)$ we have the inequality $\|u\|_{H^{s-1}(\R)}\leq c\|u\|_{H^s(\R)}$, for some $c>0$, that will play great importance for us. Throughout this subsection $c_{s,R}$ stands for a generic constant, eventually depending on $R$ and $s$.

\begin{proposition}\label{prop2.1}
The equation 
\bb\label{2.3.1}
u_t-u_{txx}+\lambda(u-u_{xx})+3u^2u_x=uu_{xxx}+2u_xu_{xx}
\ee
is equivalent to
\bb\label{2.3.2}
u_t+uu_x=F(u),
\ee
where 
\bb\label{2.3.3}
F(u)=-\p_x\Lambda^{-2}\left(u^2+\f{u_x^2}{2}\right)-\p_x\Lambda^{-2}\left(h(u)\right)-\lambda u
\ee
and
\bb\label{2.3.4}
h(u):=u^3-\f{3}{2}u^2.
\ee
\end{proposition}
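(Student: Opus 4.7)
The strategy is to rewrite \eqref{2.3.1} in the form $\Lambda^2(u_t + uu_x + \lambda u) = -\partial_x P(u, u_x)$ for some polynomial expression $P$, and then invert the Helmholtz operator $\Lambda^2 = 1 - \partial_x^2$ on both sides. Since $\Lambda^{-2}$ is a bounded Fourier multiplier (convolution with $g(x) = e^{-|x|}/2$) that commutes with $\partial_x$, this will immediately produce \eqref{2.3.2}--\eqref{2.3.3}. Conversely, applying $\Lambda^2$ to \eqref{2.3.2} returns \eqref{2.3.1}, so the two formulations are equivalent.

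To locate $P$, the key computation is the identity $(1-\partial_x^2)(uu_x) = uu_x - 3u_x u_{xx} - u u_{xxx}$, obtained by successively differentiating $uu_x$ (observing that $(uu_x)_x = u_x^2 + uu_{xx}$ and then $(uu_x)_{xx} = 3 u_x u_{xx} + u u_{xxx}$). Combined with the trivial identity $(1-\partial_x^2)(u_t + \lambda u) = (u_t - u_{txx}) + \lambda(u - u_{xx})$, one gets
$$(1-\partial_x^2)(u_t + uu_x + \lambda u) = (u_t - u_{txx}) + \lambda(u - u_{xx}) + uu_x - 3u_x u_{xx} - u u_{xxx}.$$
Substituting the right-hand side of \eqref{2.3.1} for $u_t - u_{txx} + \lambda(u - u_{xx})$, the expression collapses after cancellation to $uu_x - u_x u_{xx} - 3u^2 u_x$, which is precisely $-\partial_x(u^3 - u^2/2 + u_x^2/2)$.

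From this point I would apply $\Lambda^{-2}$ to both sides, split the argument as $u^3 - u^2/2 + u_x^2/2 = (u^2 + u_x^2/2) + h(u)$ with $h(u) = u^3 - (3/2)u^2$ as in \eqref{2.3.4}, and move $\lambda u$ to the right-hand side to obtain \eqref{2.3.3}. No serious obstacle arises: this is essentially a rearrangement. The only care needed is regularity, but since we work with $s > 3/2$, the estimates $\|\partial_x \Lambda^{-2} f\|_{H^s(\R)} \leq \|f\|_{H^{s-1}(\R)}$ recorded in the Notation subsection together with the algebra property of Lemma \ref{lema2.4} ensure that every operation above is legitimate on $H^s(\R)$ and that $F(u)$ lies in a Sobolev space of appropriate order.
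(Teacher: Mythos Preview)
Your proposal is correct and follows essentially the same approach as the paper: the equivalence is obtained by relating the two formulations through the Helmholtz operator $\Lambda^2=1-\partial_x^2$. The paper's proof is the one-line converse direction (apply $\Lambda^2$ to \eqref{2.3.2} and expand), whereas you carry out the forward direction explicitly, supplying the identity $(1-\partial_x^2)(uu_x)=uu_x-3u_xu_{xx}-uu_{xxx}$ and the splitting $u^3-\tfrac{1}{2}u^2+\tfrac{1}{2}u_x^2=(u^2+\tfrac{1}{2}u_x^2)+h(u)$ that the paper leaves implicit; the underlying computation is the same.
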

\begin{proof}
Applying the operator $\Lambda^2$ into \eqref{2.3.2} and taking \eqref{2.3.3} and \eqref{2.3.4} into account, we obtain the equation in \eqref{2.3.1}.
\end{proof}

\subsection{Local well-posedness}\label{subsec2.2}

\begin{proposition}\label{prop2.1}
Let $F:B\rightarrow H^{s}(\R)$ be the function given by \eqref{2.3.3}. Then $F$ is Lipschitz. In particular, $F$ is bounded in $B$.
\end{proposition}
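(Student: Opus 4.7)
I would split $F$ into three pieces and estimate each separately:
$$F(u)=F_1(u)+F_2(u)+F_3(u),\quad F_1(u)=-\p_x\Lambda^{-2}\!\left(u^2+\tfrac{u_x^2}{2}\right),\ F_2(u)=-\p_x\Lambda^{-2}h(u),\ F_3(u)=-\lambda u.$$
The piece $F_3$ is trivially Lipschitz with constant $|\lambda|$, so the real work is with $F_1$ and $F_2$. For both pieces I would open with the operator bound $\|\p_x\Lambda^{-2}f\|_{H^s(\R)}\le \|f\|_{H^{s-1}(\R)}$ recorded in the notation subsection, which reduces the $H^s$-estimate of $F_j(u)-F_j(v)$ to an $H^{s-1}$-estimate of the quadratic/cubic symbols.

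For $F_1$, I would factor the differences:
$u^2-v^2=(u-v)(u+v)$ and $u_x^2-v_x^2=(u_x-v_x)(u_x+v_x)$. Because $s>3/2$, we have $s-1>1/2$, so the algebra property of Lemma \ref{lema2.4} applies in $H^{s-1}(\R)$. Combined with $\|u+v\|_{H^{s-1}(\R)}\le 2R$, $\|u_x\pm v_x\|_{H^{s-1}(\R)}\le \|u\pm v\|_{H^s(\R)}$, and the continuous embedding $H^s\hookrightarrow H^{s-1}$, this gives
$$\|F_1(u)-F_1(v)\|_{H^s(\R)}\le c_{s,R}\,\|u-v\|_{H^s(\R)}.$$

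For $F_2$, the cubic $h(u)=u^3-\tfrac{3}{2}u^2$ factors as
$h(u)-h(v)=(u-v)\bigl(u^2+uv+v^2-\tfrac{3}{2}(u+v)\bigr)$.
I would bound the second factor in $H^{s-1}(\R)$ using the algebra property (applied twice for the quadratic monomials) and $\|u\|_{H^s},\|v\|_{H^s}\le R$, producing a constant depending only on $s$ and $R$. Then the bound $\|\p_x\Lambda^{-2}f\|_{H^s}\le \|f\|_{H^{s-1}}$ gives
$$\|F_2(u)-F_2(v)\|_{H^s(\R)}\le c_{s,R}\,\|u-v\|_{H^s(\R)}.$$

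Summing the three contributions shows $F$ is Lipschitz on $B$ with some constant $L=L(s,R,\lambda)$. For the ``in particular'' part, I would simply observe that $F(0)=0$ directly from the formula for $F$, so that $\|F(u)\|_{H^s(\R)}=\|F(u)-F(0)\|_{H^s(\R)}\le L\,R$ for every $u\in B$. The whole argument is routine bookkeeping; the only point that requires minor attention is to make sure that the algebra/negative-multiplier lemmas are invoked in the right Sobolev scale (here $H^{s-1}$ with $s-1>1/2$), so no step is truly a serious obstacle.
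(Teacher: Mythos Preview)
Your proof is correct and follows essentially the same route as the paper: split $F$ into the quadratic, cubic, and linear pieces, pass from $H^s$ to $H^{s-1}$ via the operator bound for $\p_x\Lambda^{-2}$, and then control each difference by a product estimate in $H^{s-1}$ (valid since $s-1>1/2$). The only cosmetic differences are that the paper invokes Lemma~\ref{lema2.5} (negative multiplier) for the term $(u+v)(u-v)$ and appeals to Lemma~\ref{lema2.3} for $h(u)-h(v)$, whereas you use the algebra property in $H^{s-1}$ throughout and handle $h$ by explicit factoring; your bookkeeping is slightly more self-contained but otherwise identical in spirit, and your ``in particular'' observation $F(0)=0$ is exactly what the paper records in its subsequent Corollary.
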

\begin{proof}
We firstly show that $F$ is well-defined. Since $u\in H^{s}$, $s>3/2$, it follows that $u_x^2\in H^{s-1}$ and $\p_x\Lambda^{-2}(u_x^2)\in H^{s}(\R)$, whereas $\p_x\Lambda^{-2}(u^2)\in H^{s+1}(\R)\subseteq H^{s}(\R)$. By Lemma \ref{lema2.2} we see that $h(u)$, given in \eqref{2.3.4}, belongs to $H^s(\R)$ provided that $u\in H^s(\R)$. Therefore, $\p_x\Lambda^{-2}h(u)\in H^{s+1}(\R)$.

Let us prove that $F$ is Lipschitz. Given $u,\,v\in B$, we have
$$
\ba{lcl}
\ds{\|F(u)-F(v)\|_{H^s(\R)}}&\leq&\ds{\|\p_x\Lambda^{-2}(u^2-v^2)\|_{H^s(\R)}+\f{1}{2}\|\p_x\Lambda^{-2}(u_x^2-v_x^2)\|_{H^s(\R)}}\\
\\&+&\ds{\|\p_x\Lambda^{-2}(h(u)-h(v))\|_{H^s(\R)}+|\lambda|\|u-v\|_{H^s(\R)}}.
\ea
$$
Applying lemmas \ref{lema2.3} and \ref{lema2.4} to the previous inequality, we obtain
\bb\label{2.4.1}
\ba{lcl}
\ds{\|F(u)-F(v)\|_{H^s(\R)}}&\leq&\ds{c_{s,R}\|(u+v)(u-v)\|_{H^{s-1}(\R)}+c_{s,R}\|(u+v)_x(u-v)_x\|_{H^{s-1}(\R)}}\\
\\&&\ds{+\|h(u)-h(v)\|_{H^{s-1}(\R)}+|\lambda|\|u-v\|_{H^s(\R)}.}
\ea
\ee
Since $u,v\in B$, we conclude that $h$ is locally Lipschitz and, as a consequence, we have $$\|h(u)-h(v)\|_{H^{s-1}(\R)}\leq c_{s,R} \|h(u)-h(v)\|_{H^{s}(\R)}\leq c_{s,R}\|u-v\|_{H^s(\R)}.$$

Applying Lemma \ref{lema2.4} and the fact that $\p_x\in{\cal L}(H^s(\R),H^{s-1}(\R))$, we can estimate 
$$
\ba{lcl}\|(u+v)_x(u-v)_x\|_{H^{s-1}(\R)}&\leq& c_{s,R}\|(u+v)_x\|_{H^{s-1}(\R)}\|(u-v)_x\|_{H^{s-1}(\R)}\\
\\
&\leq& c_{s,R}(\|u\|_{H^s(\R)}+\|v\|_{H^s(\R)})\|(u-v)\|_{H^{s}(\R)}.
\ea
$$

From Lemma \ref{lema2.5} we have $$\|(u+v)(u-v)\|_{H^{s-1}(\R)}\leq c_{s,R}(\|u\|_{H^s(\R)}+\|v\|_{H^s(\R)})\|(u-v)\|_{H^{s-1}(\R)}\leq c_{s,R}\|(u-v)\|_{H^{s}(\R)}.$$

Substituting the estimates above into the inequality \eqref{2.4.1} we prove the desired result. 
\end{proof}

\begin{corollary}
If $F$ is the function given by \eqref{2.3.3}, then $\|F(u)\|_{H^s(\R)}\leq c$, for some $c>0$.
\end{corollary}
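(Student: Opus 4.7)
The plan is to derive the boundedness of $F$ on $B$ directly from the Lipschitz property already established in Proposition \ref{prop2.1}. The key observation is that $F(0) = 0$: inspecting the formula \eqref{2.3.3}, both $\p_x\Lambda^{-2}\bigl(u^2+u_x^2/2\bigr)$ and $\lambda u$ vanish at $u=0$, while the remaining term involves $h(u) = u^3 - \tfrac{3}{2}u^2$, which satisfies $h(0)=0$ by \eqref{2.3.4}. Therefore $\p_x\Lambda^{-2}h(0) = 0$ as well.

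With this in hand, I would apply the Lipschitz estimate from Proposition \ref{prop2.1} with $v = 0$, which yields
\[
\|F(u)\|_{H^s(\R)} = \|F(u) - F(0)\|_{H^s(\R)} \leq c_{s,R}\,\|u\|_{H^s(\R)}.
\]
Since $u \in B = B(0;R)$, we have $\|u\|_{H^s(\R)} \leq R$, and hence $\|F(u)\|_{H^s(\R)} \leq c_{s,R} R =: c$, which is the desired bound.

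There is essentially no obstacle here; the corollary is a packaging statement that makes explicit the ``In particular, $F$ is bounded in $B$'' clause of Proposition \ref{prop2.1}. The only point to verify carefully is that $F(0)=0$, and I would write this out explicitly by evaluating \eqref{2.3.3} term by term, since it is what makes the reduction to the Lipschitz bound work without picking up an additive constant.
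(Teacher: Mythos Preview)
Your proof is correct and follows exactly the paper's approach: the paper's proof reads ``It follows from the fact that $\|u\|_{H^s(\R)}\leq R$ and $F(0)=0$,'' which is precisely your argument of combining the Lipschitz estimate from Proposition \ref{prop2.1} with $v=0$ and the bound on $B$. Your additional care in verifying $F(0)=0$ term by term is a welcome elaboration of what the paper leaves implicit.
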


\begin{proof}
If follows from the fact that $\|u\|_{H^s(\R)}\leq R$ and $F(0)=0$.
\end{proof}

\begin{proposition}\label{prop2.2}
Let $F:B\rightarrow H^{s-1}(\R)$ be the function given by \eqref{2.3.3}. Then $F$ is Lipschitz. 
\end{proposition}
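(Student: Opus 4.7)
The plan is to mirror the architecture of the proof of Proposition \ref{prop2.1}, but to replace the target space $H^{s}(\R)$ by $H^{s-1}(\R)$ throughout and, crucially, to measure the distance between $u$ and $v$ in the weaker norm $\|\cdot\|_{H^{s-1}(\R)}$ on both sides. That is, I aim to establish the stronger conclusion
\bb\label{2.4.plan}
\|F(u)-F(v)\|_{H^{s-1}(\R)}\leq c_{s,R}\,\|u-v\|_{H^{s-1}(\R)}\qquad \text{for every } u,v\in B,
\ee
which is precisely the form of Lipschitz continuity that Kato's theory will require at the level of the reference space $X=H^{s-1}(\R)$, as a complement to Proposition \ref{prop2.1} at the level of $Y=H^{s}(\R)$.

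First I would split $F(u)-F(v)$ into its four natural pieces coming from \eqref{2.3.3}: the contributions of $u^{2}$, of $u_x^{2}/2$, of $h(u)$, and of the dissipative term $-\lambda u$. The last of these contributes $|\lambda|\|u-v\|_{H^{s-1}(\R)}$ at once. For the other three I would apply the multiplier bound $\|\partial_x\Lambda^{-2}f\|_{H^{s-1}(\R)}\leq \|f\|_{H^{s-2}(\R)}$ (the $s\mapsto s-1$ version of the estimate recalled in the notation section), thereby reducing the problem to controlling $u^{2}-v^{2}$, $u_x^{2}-v_x^{2}$ and $h(u)-h(v)$ in the $H^{s-2}(\R)$ norm.

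Each of these differences should be factored as a product whose first factor is $(u-v)$-type (to be estimated in $H^{s-2}(\R)$) and whose second factor lies in $H^{s-1}(\R)$ with norm controlled by $R$; then Lemma \ref{lema2.5}, applied with $s$ replaced by $s-1$, closes the estimate. The shift is legitimate because $s>3/2$ gives $s-1>1/2$. Concretely, I would write $u^{2}-v^{2}=(u+v)(u-v)$ and $u_x^{2}-v_x^{2}=(u_x+v_x)(u_x-v_x)$, and for the cubic piece use the purely algebraic identity
$$
h(u)-h(v)=(u-v)\,Q(u,v),\qquad Q(u,v):=u^{2}+uv+v^{2}-\tfrac{3}{2}(u+v),
$$
whose factor $Q(u,v)$ belongs to $H^{s}(\R)\hookrightarrow H^{s-1}(\R)$ with norm bounded by $c_{s,R}$ thanks to the algebra Lemma \ref{lema2.4} together with $\|u\|_{H^{s}(\R)},\|v\|_{H^{s}(\R)}\leq R$. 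The product estimates then yield \eqref{2.4.plan} after one application each of $\|w\|_{H^{s-2}(\R)}\leq \|w\|_{H^{s-1}(\R)}$ and $\|w_x\|_{H^{s-1}(\R)}\leq \|w\|_{H^{s}(\R)}$.

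The main obstacle, if any, is the cubic nonlinearity: the Moser-type Lemma \ref{lema2.3} used in Proposition \ref{prop2.1} provides Lipschitz control of $h$ only at the $H^{s}$ level and therefore produces a factor $\|u-v\|_{H^{s}(\R)}$, which is one order too strong for \eqref{2.4.plan}. The explicit factorization of $h(u)-h(v)$ sidesteps this by converting a composition estimate into a bilinear product estimate to which Lemma \ref{lema2.5} directly applies; the only thing to verify is that $Q(u,v)$ stays uniformly bounded in $H^{s-1}(\R)$ as $u,v$ range over $B$, which is immediate from the algebra property.
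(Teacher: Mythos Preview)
Your proof is correct and follows the same architecture as the paper's: apply $\|\partial_x\Lambda^{-2}f\|_{H^{s-1}(\R)}\leq\|f\|_{H^{s-2}(\R)}$ to reduce each piece to an $H^{s-2}$ product estimate, and then close with Lemma~\ref{lema2.5} at regularity $s-1>1/2$. The paper only writes out the initial splitting and then says ``similar to Proposition~\ref{prop2.1}''; your explicit factorization $h(u)-h(v)=(u-v)\,Q(u,v)$ with $Q(u,v)\in H^{s-1}(\R)$ uniformly bounded on $B$ is exactly the detail that this terse reference hides, since the Moser-type Lemma~\ref{lema2.3} as used in Proposition~\ref{prop2.1} only delivers a factor $\|u-v\|_{H^{s}(\R)}$ on the right, whereas the Kato framework needs the weaker $\|u-v\|_{H^{s-1}(\R)}$.
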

\begin{proof} We begin with observing that
$$
\ba{lcl}
\ds{\|F(u)-F(v)\|_{H^{s-1}(\R)}}&\leq&\ds{\|(u+v)(u+v)\|_{H^{s-2}(\R)}+\f{1}{2}\|(u+v)_x(u-v)_x\|_{H^{s-2}(\R)}}\\
\\&+&\ds{\|h(u)-h(v)\|_{H^{s-2}(\R)}+|\lambda|\|u-v\|_{H^{s-1}(\R)}}
\ea
$$
The result is then proved in a similar way as done in Proposition \ref{prop2.1} and for this reason we ommit it.
\end{proof}

{\bf Proof of Theorem \ref{teo1.1}}
Firstly note that the equation in \eqref{1.0.2} is equivalent to \eqref{2.3.2}. By propositions \ref{prop2.1} and \ref{prop2.2} we see that the function $F$ given by \eqref{2.3.3} is bounded in bounded sets of $H^{s}(\R)$ and Lipschitz in $H^{s-1}(\R)$. Moreover, once $H^{s}(\R)$ and $H^{s-1}(\R)$ are Hilbert spaces, then they are reflexive and $H^s(\R)$ is continuously and densely embedded in $H^{s-1}(\R)$. Additionally, the operator $u\p_x$ generates a one-parameter $C_0$ semi-group in $H^{s}(\R)$ ({\it e.g}, see \cite{liu2011,mustafa,blanco} for the proof of this affirmation). These observations show that \eqref{2.3.2} subjected to the initial condition $u(0,x)=u_0(x)$ satisfies the conditions required in \cite{kato} (see also conditions {\bf C1}--{\bf C5} in \cite{igor-jde} for a more to the point presentation). 
The result is then a consequence of Kato's theorem, see Theorem 6 in \cite{kato}.\quad\quad$\square$

\section{Invariants}\label{sec3}

In this section we study the structural properties of the equation \eqref{2.3.1}, for any value of $\lambda$. We begin with the Lie point symmetries, which tell us that the most general invariant solutions admitted by the equation are travelling waves. Next, we look for the construction of conserved currents of the equation, which provides us invariants with respect to $t$. 

\subsection{Lie symmetries}

In what follows we present a {\it to the point} overview of symmetries and conservation laws for equations with two independent variables $(t,x)$ and one dependent one $u$. For further details, see \cite{bk,ibrabook,olverbook}.

A smooth function depending on $(t,x,u)$ and derivatives of $u$ up to a finite, but arbitrary, order is called differential function, while the set of all of differential functions is denoted by ${\cal A}$. The order of a differential function $F\in{\cal A}$ is the order of the highest derivative appearing in it.

There are some natural operators acting on ${\cal A}$: the total derivative operators with respect to $t$ and $x$ are given by
$$
\ba{lcl}
D_{t}&=&\ds{\f{\p}{\p t}+u_{t}\f{\p}{\p u}+u_{tt}\f{\p}{\p u_{t}}+u_{tx}\f{\p}{\p u_{x}}+\cdots},\\
\\
D_x&=&\ds{\f{\p}{\p x}+u_{x}\f{\p}{\p u}+u_{xt}\f{\p}{\p u_{t}}+u_{xx}\f{\p}{\p u_{x}}+\cdots},
\ea
$$
and the Euler-Lagrange operator
\bb\label{3.1.1}
{\cal E}_u:=\f{\p}{\p u}-D_x\f{\p}{\p u_x}-D_t\f{\p}{\p u_t}+D_x^2\f{\p}{\p u_{xx}}+\cdots .
\ee
Let $(t,x,u)\mapsto (\overline{t}(t,x,u,\epsilon),\overline{x}(t,x,u,\epsilon),\overline{u}(t,x,u,\epsilon))$ be a one-parameter group of transformations that at $\epsilon=0$ corresponds to the identity. Assuming that such transformation is analytic with respect to the parameter $\epsilon$, we have
\bb\label{3.1.3}
\ba{lcl}
\overline{t}(t,x,u,\epsilon)&=&t+\epsilon\tau(t,x,u)+\mathcal{O}(\epsilon^2),\\
\\
\overline{x}(t,x,u,\epsilon)&=&x+\epsilon\xi(t,x,u)+\mathcal{O}(\epsilon^2),\\
\\
\overline{u}(t,x,u,\epsilon)&=&u+\epsilon\eta(t,x,u)+\mathcal{O}(\epsilon^2).
\ea
\ee
The coefficients $\tau,\,\xi,\,\eta$, which depend only on $(t,x,u)$, define the infinitesimal generator
\bb\label{3.1.4}
X=\tau\f{\p}{\p t}+\xi\f{\p}{\p x}+\eta\f{\p}{\p u}
\ee
of the group of transformations. The transformations \eqref{3.1.3} are Lie point symmetries of the equation in \eqref{2.3.1} if and only if
\bb\label{3.1.5}
X^{(3)}(u_t-u_{txx}+\lambda(u-u_{xx})-uu_{xxx}-2u_xu_{xx}+3u^2u_x)=0,
\ee
whenever \eqref{2.3.1} holds or, what is the same, $$
\ba{l}
X^{(3)}(u_t-u_{txx}+\lambda(u-u_{xx})-uu_{xxx}-2u_xu_{xx}+3u^2u_x)\\
\\=\mu(u_t-u_{txx}+\lambda(u-u_{xx})-uu_{xxx}-2u_xu_{xx}+3u^2u_x),
\ea$$
for some $\mu\in{\cal A}$. The operator $X^{(3)}$ above is the third order prolongation of the generator \eqref{3.1.4} and it is given by (here we only write the needed components of the operator. In the general case, we have more components in the prolongation.)
$$
X^{(3)}=X+\zeta^t\f{\p}{\p u_t}+\zeta^x\f{\p}{\p u_x}+\zeta^{xx}\f{\p}{\p u_{xx}}+\zeta^{xxx}\f{\p}{\p u_{xxx}}+\zeta^{txx}\f{\p}{\p u_{txx}},
$$
where
$$
\ba{lcl}
\zeta^{t}&=&D_{t}(\eta)-(D_{t}\tau) u_{t}-(D_{t}\xi) u_{x},\\
\\
\zeta^{x}&=&D_{x}(\eta)-(D_{x}\tau) u_{t}-(D_{x}\xi) u_{x},\\
\\
\zeta^{xx}&=&D_{x}(\zeta^{x})-(D_{x}\tau) u_{xt}-(D_{x}\xi) u_{xx},\\
\\
\zeta^{xxx}&=&D_{x}(\zeta^{xx})-(D_{x}\tau) u_{xxt}-(D_{x}\xi) u_{xxx},\\
\\
\zeta^{txx}&=&D_{t}(\zeta^{xx})-(D_{t}\tau) u_{ttx}-(D_{t}\xi) u_{txx}.
\ea
$$

The condition \eqref{3.1.5} is called invariant condition. From it we prove the following result:
\begin{theorem}\label{teo3.1}
The Lie point symmetries of the equation \eqref{2.3.1} are generated by the operators 
\bb\label{3.1.8}
X_1=\f{\p}{\p t},\quad X_2=\f{\p}{\p x},
\ee
which correspond to translations in $t$ and $x$, respectively.
\end{theorem}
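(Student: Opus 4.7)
The plan is to apply the standard Lie point symmetry algorithm dictated by the invariance condition \eqref{3.1.5}. Since the coefficients $\tau,\xi,\eta$ in the generator \eqref{3.1.4} depend only on $(t,x,u)$, the third prolongation $X^{(3)}$ produces expressions that are polynomial in the jet coordinates $u_t,u_x,u_{xx},u_{xxx},u_{tx},u_{txx}$. After substituting into \eqref{3.1.5} and using the equation \eqref{2.3.1} to eliminate $u_{txx}$ on the solution manifold, I obtain a polynomial identity in the remaining (now independent) jet variables. The standard \emph{splitting trick} — setting the coefficient of each distinct monomial to zero — yields an overdetermined linear system of determining equations for $\tau,\xi,\eta$.

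First I would write out $\zeta^t,\zeta^x,\zeta^{xx},\zeta^{xxx},\zeta^{txx}$ explicitly. The cleanest constraints come from the highest-order monomials. The coefficient of $u_{xxx}^2$, which arises from the term $-u\,\zeta^{xxx}$ combined with the Jacobian of the substitution for $u_{txx}$, forces $\xi_u=0$. Similarly, the coefficients of $u_{tx}u_{xxx}$ and $u_{tx}^2$ force $\tau_x=0$ and $\tau_u=0$, so that $\tau=\tau(t)$ and $\xi=\xi(t,x)$. Inspecting the coefficient of $u_{xx}^2$ next forces $\eta_{uu}=0$, i.e.\ $\eta$ is linear in $u$, say $\eta=a(t,x)u+b(t,x)$, and the coefficient of $u_x u_{tx}$ (or similar mixed monomial) yields $\xi_t=0$, leaving $\xi=\xi(x)$.

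At this stage the remaining determining equations involve only the four unknowns $\tau(t),\xi(x),a(t,x),b(t,x)$, organised by powers of $u$ and by the explicit derivatives $u_x,u_{xx},u_{xxx}$. The coefficient of $u^2u_x$, contributed by the cubic nonlinearity $3u^2u_x$, yields a relation of the form $2a(t,x)+\tau'(t)-3\xi'(x)=0$, while the coefficient of $uu_{xxx}$ (coming from $-u\zeta^{xxx}$ together with the prolongation of $-uu_{xxx}$) supplies an independent relation $a(t,x)+\tau'(t)-3\xi'(x)=0$. Comparing these two forces $a\equiv 0$. The coefficient of $u_xu_{xx}$ then gives $\tau'(t)=\xi'(x)$, so both derivatives are equal to the same constant $c$. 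The $\lambda(u-u_{xx})$ contribution, picked out from the coefficients of $u$ and $u_{xx}$, then forces $b\equiv 0$ (independently of whether $\lambda=0$, since the dispersive term $u u_{xxx}$ alone already excludes a pure rescaling of $u$), and the coefficient of $u_t$ together with the free term forces $c=0$, hence $\tau$ and $\xi$ are arbitrary constants and $\eta=0$. This gives precisely the two-dimensional abelian algebra spanned by $X_1=\partial_t$ and $X_2=\partial_x$.

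The main obstacle is purely computational bookkeeping: the prolongation coefficient $\zeta^{txx}$ is a sizeable polynomial, and the substitution for $u_{txx}$ introduces cross terms that must be carefully reassigned to the correct monomial buckets. I expect no conceptual surprises — for Camassa--Holm type equations with cubic nonlinearity the symmetry algebra is known to be very rigid — but care is needed to verify that the cubic term $3u^2u_x$ and the nonlinear dispersion $uu_{xxx}$ together do exclude any scaling symmetry that would otherwise be compatible with the linear part.
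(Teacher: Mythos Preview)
Your approach is correct and is exactly the standard Lie algorithm that the paper invokes: the paper does not spell out the determining system at all---it simply asserts that the invariance condition \eqref{3.1.5} yields the two translation generators (the acknowledgements indicate the computation was carried out with the GeM package)---so your hand-splitting of the prolonged equation into monomial buckets is a more detailed execution of the same method rather than a different route. One small bookkeeping remark: with $\tau,\xi,\eta$ depending only on $(t,x,u)$, no genuine $u_{xxx}^2$ monomial appears after the substitution for $u_{txx}$; the constraints $\xi_u=\tau_u=\tau_x=0$ are instead read off from mixed monomials such as $u_xu_{xxx}$, $u_tu_{xxx}$ and $u_{tx}u_{xx}$, but this does not affect the subsequent reductions you describe, and your scaling-exclusion argument (that $3u^2u_x$ and $uu_{xxx}$ have incompatible weights once $\beta=1$ is forced by $u_t-u_{txx}$) is the right mechanism that kills the would-be dilation regardless of $\lambda$.
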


The fluxes determined by the operators in \eqref{3.1.8} are, respectively, given by $e^{\epsilon X_1}(t,x,u)=(t+\epsilon,x,u)$ and $e^{\epsilon X_2}(t,x,u)=(t,x+\epsilon,u)$, meaning that the only symmetries of \eqref{2.3.1} are translations. Taking the linear combination $X_1+cX_2$, we obtain $e^{\epsilon (X_1+cX_2)}(t,x,u)=(t+\epsilon,x+c\epsilon,u)$, which implies that if $u$ is a solution of \eqref{2.3.1} invariant under this flux, then $u=f(x-ct)$, for some real function $f$.
\subsection{Conserved currents}

We recall that a conserved current for a partial differential equation $F=0$ with two independent variables $(t,x)$ and a dependent variable $u$ is a pair $C=(C^0,C^1)$, where $C^0,C^1\in{\cal A}$, such that $\mathop{\rm Div}(C):=D_t C^0+D_x C^1$ vanishes identically on the solutions of the equation. It is possible to show that (see \cite{olverbook}, page 266) it is equivalent to
\bb\label{3.2.1}
D_{t}C^0+D_{x}C^1=\phi (F),
\ee
where $\phi\in{\cal A}$ is called characteristic of the conservation law, while the expression in \eqref{3.2.1} is referred as the characteristic form of the conservation law corresponding to the conserved current $C$. The order of the conserved current $C$ is defined as the maximum of the orders of its components.

Since $\mathop{\rm Div}(C)\in\ker{({\cal E}_u)}$, for any $C=(C^0,C^1)$ (see \cite{olverbook}, Theorem 4.7, page 248), we can determine the function $\phi$ in \eqref{3.2.1} through the equation
\bb\label{3.2.2}
{\cal E}_u(\phi F)=0,
\ee
where ${\cal E}_u$ is the Euler-Lagrange operator \eqref{3.1.1}.

Let us assume that $\phi=\phi(t,x,u,u_t,u_x,u_{tt},u_{tx},u_{xx})$. We will use the condition \eqref{3.2.2} to obtain conserved currents for equation \eqref{2.3.1}. We begin with the case $\lambda=0$. Substituting 
\bb\label{3.2.3}
F:=u_{t}-u_{txx}-uu_{xxx}-2u_{x}u_{xx}+3u^2u_{x}=0
\ee
and $\phi$ into \eqref{3.2.2}, we obtain the set of equations 
\bb\label{3.2.4}
\left\{\ba{l}
\phi_{uu}+6u\phi_{u_{tx}}=0,\\
\phi_{u_{x}}-\phi_{u_{tx}}u_{x}=0,\\
\phi_{u_{xx}}-u\phi_{u_{tx}}=0,\\
\phi_{u_{tx}u_{tx}}=0,\\
\phi_{uu_{tx}}=0,\\
\phi_{u_{tt}}=0,\\
\phi_{u_{t}}=0.
\end{array}\right.
\ee
The solution of system \eqref{3.2.4}  is
\bb\label{3.2.5}
\phi=c_1+c_2u +c_3\left(u^{3} - \dfrac{1}{2}u_{x}^{2} - u_{tx} - uu_{xx}\right),
\ee
where $c_1,c_2$ and $c_3$ are arbitrary constants. Multiplying \eqref{3.2.5} and $F$ given by \eqref{3.2.3} we obtain, after some calculations,
$$
\ba{lcl}
\phi F&=&c_1\left[D_t\left( u-u_{xx}\right)+D_x\left( u^{3} - \dfrac{1}{2}u_{x}^{2} - uu_{xx} \right)\right]\\
\\

&+&c_2\left[D_t\left( \dfrac{u^{2}+u_{x}^{2}}{2}\right)+D_x\left( \dfrac{3}{4}u^{4} - uu_{tx} -u^{2}u_{xx}\right)\right]\\
\\
&+&c_3\left[D_t\left(  \dfrac{1}{4}u^{4} +\dfrac{1}{2}uu_{x}^{2}\right)+D_x\left( \dfrac{1}{2}u^{6} + \dfrac{1}{8}u_{x}^{4} - \dfrac{1}{2}u_{t}^{2} - uu_{t}u_{x} - \dfrac{1}{2}u^{3}u_{x}^{2} \right.\right.\\
\\
&&\left.\left.+ \dfrac{1}{2}uu_{x}^{2}u_{xx}+ \dfrac{1}{2}u_{x}^{2}u_{tx} + \dfrac{1}{2}u_{tx}^{2}+ \dfrac{1}{2}u^{2}u_{xx}^{2} - u^{4}u_{xx} +uu_{xx}u_{tx} - u^{3}u_{tx} \right)\right].
\ea
$$

We have proved the following result:
\begin{theorem}\label{teo3.2}
The conserved currents up to second order of the equation \eqref{1.0.1} is generated by linear combinations of the currents
\bb\label{3.2.7}
\left( u-u_{xx}, u^{3} - \dfrac{1}{2}u_{x}^{2} - uu_{xx} \right),
\ee
$$
\left(\dfrac{u^{2}+u_{x}^{2}}{2},  \dfrac{3}{4}u^{4} - uu_{tx} -u^{2}u_{xx} \right),
$$
and
$$
\ba{lcl}
\left(\dfrac{1}{4}u^{4} +\dfrac{1}{2}uu_{x}^{2}\right.&,&  \dfrac{1}{2}u^{6} + \dfrac{1}{8}u_{x}^{4} - \dfrac{1}{2}u_{t}^{2} - uu_{t}u_{x} - \dfrac{1}{2}u^{3}u_{x}^{2} + \dfrac{1}{2}uu_{x}^{2}u_{xx}\\
\\
&+&\left.\dfrac{1}{2}u_{x}^{2}u_{tx} + \dfrac{1}{2}u_{tx}^{2}+ \dfrac{1}{2}u^{2}u_{xx}^{2} - u^{4}u_{xx} +uu_{xx}u_{tx} - u^{3}u_{tx} \right).
\ea
$$
\end{theorem}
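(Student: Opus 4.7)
The plan is to exploit the characteristic form of conservation laws introduced just before the theorem. Any conserved current $C=(C^0,C^1)$ for the equation $F=0$ satisfies $D_t C^0+D_x C^1=\phi F$ for some characteristic $\phi\in{\cal A}$, and since every total divergence lies in $\ker({\cal E}_u)$, the determining condition is ${\cal E}_u(\phi F)=0$. I would therefore restrict to characteristics of second order, $\phi=\phi(t,x,u,u_t,u_x,u_{tt},u_{tx},u_{xx})$, and extract the equations in \eqref{3.2.4} from the identity ${\cal E}_u(\phi F)=0$.

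Concretely, with $F=u_t-u_{txx}-uu_{xxx}-2u_xu_{xx}+3u^2u_x$, expanding ${\cal E}_u(\phi F)$ produces a polynomial expression in the mixed derivatives of $u$ of order three and higher (such as $u_{ttt},u_{ttx},u_{txx},u_{xxx},u_{xxxx},\ldots$) whose coefficients depend only on the variables on which $\phi$ is allowed to depend. Requiring this identity to hold for arbitrary values of these higher derivatives forces every coefficient to vanish; collecting like monomials gives exactly the overdetermined system \eqref{3.2.4}.

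Next I would solve this system by straightforward integration. The equations $\phi_{u_t}=0$ and $\phi_{u_{tt}}=0$ eliminate dependence on $u_t$ and $u_{tt}$. The equation $\phi_{u_{tx}u_{tx}}=0$ forces $\phi$ to be affine in $u_{tx}$, and $\phi_{uu_{tx}}=0$ further restricts its $u_{tx}$-coefficient to be independent of $u$. The remaining three relations then couple the dependences on $u,u_x,u_{xx},u_{tx}$; integrating them successively yields precisely the three-parameter family \eqref{3.2.5}, spanned by $1$, $u$, and $u^3-\tfrac{1}{2}u_x^2-u_{tx}-uu_{xx}$.

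Finally, for each value $c_1,c_2,c_3$ I would exhibit a decomposition of the corresponding piece of $\phi F$ as a total divergence $D_tC^0+D_xC^1$. This is a purely algebraic manipulation: one groups terms in $\phi F$ and uses the Leibniz rule in reverse (or, more systematically, the homotopy formula for the inverse of the divergence), identifying the $C^0$ and $C^1$ components by inspection. Reading off the resulting pairs gives exactly the three currents stated in the theorem. The main obstacle is the bookkeeping in two places: first, carefully collecting the coefficients of all higher-order monomials after applying ${\cal E}_u$ to derive \eqref{3.2.4}, and second, the reverse integration by parts needed to recover the flux $C^1$ of the third current, which contains many terms but requires no conceptual novelty.
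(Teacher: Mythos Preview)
Your proposal is correct and follows essentially the same approach as the paper: both apply the Euler--Lagrange operator to $\phi F$ with a second-order ansatz for $\phi$, derive the determining system \eqref{3.2.4}, solve it to obtain \eqref{3.2.5}, and then rewrite $\phi F$ as a total divergence to read off the three currents. The only minor addition in your write-up is the more explicit description of how the equations in \eqref{3.2.4} are integrated and the mention of the homotopy formula as an alternative to direct inspection, but the underlying argument is identical.
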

We observe that if we make the change $u(t,x)=\phi(x-ct)$, $c\neq0$, the divergence of the current \eqref{3.2.7} gives
\bb\label{3.2.10}
\f{d}{dz}[c(\phi-\phi'')-\phi^3+\f{1}{2}(\phi')^2+\phi\phi'']=0,
\ee
where the prime $'$ in \eqref{3.2.10} means derivative with respect to $z$. Equation \eqref{3.2.10} is nothing but a first integral of the ODE obtained from \eqref{1.0.1} assuming that $u(t,x)=\phi(x-ct)$, that is the most general solution of \eqref{1.0.1} invariant under translations in $t$ and $x$, which is generated by the generators given in Theorem \ref{teo3.1}.

A periodic solution of the equation \eqref{3.2.10} is (see equation (9) in \cite{kelly} or formula (1.8) in \cite{natali}) 
\bb\label{3.2.11}
\phi(z)=\al+\be\sn^2{\left(\f{2 K(k)z}{L};k\right)},
\ee
where $\al$ and $\be$ are certain functions of $L>0$ (for the meaning of $L$, see \cite{kelly,natali}), $K(k)$ represents the complete elliptic integral of first kind and $\sn$ denotes the Jacobi snoidal function (for further properties of these functions, see the Appendix A in \cite{chen2020} or \cite{kelly}). 

Recalling the relation $k^2\sn^2+\dn^2=1$, where $\dn$ is the Jacobi dnoidal function, we can express \eqref{3.2.11} (see equation (1.9) in \cite{natali}) as
$$
\phi(z)=a+b\left[\dn^2{\left(\f{2 K(k)z}{L};k\right)}-\f{E(k)}{K(k)}\right],
$$
where $E(\cdot)$ is the complete elliptic integral of the second kind and this representation of the solutions satisfies the relation
$$
\f{1}{L}\int_{0}^L\phi(z)dz=a.
$$

The solution $u(t,x)=\phi(x-ct)$, where $\phi$ is given by \eqref{3.2.11}, is orbitally stable in the energy space $H^1_{\text{per}}([0,L])$, see Theorem 2.8 in \cite{natali}. Orbital instability of periodic waves of equation \eqref{1.0.1} was investigated in \cite{kelly}, see also \cite{natali}.

\begin{corollary}\label{cor3.1}
Let $u$ be a sufficiently smooth solution of \eqref{1.0.1} such that $u(t,x)$ and its derivatives up to second order go to $0$ as $x\rightarrow\pm\infty$, and $u_0:=u(0,x)$. Then the functionals 
\bb\label{3.2.12}
{\cal H}_0[u]=\int_\R u dx,
\ee
\bb\label{3.2.13}
{\cal H}[u]=\f{1}{2}\int_\R (u^2+u_x^2) dx
\ee
and
\bb\label{3.2.14}
{\cal H}_1[u]=\int_\R \left(\dfrac{1}{4}u^{4} +\dfrac{1}{2}uu_{x}^{2}\right) dx
\ee
are independent of $t$. In particular, ${\cal H}_0[u]={\cal H}_0[u_0]$ and ${\cal H}[u]={\cal H}[u_0]$.
\end{corollary}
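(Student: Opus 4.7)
The plan is to derive each of the three invariants directly from the three conserved currents produced in Theorem \ref{teo3.2} by integrating the divergence identity in $x$ over $\R$ and discarding the boundary contributions that vanish under the stated decay hypotheses.

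First I would recall that for any conserved current $C=(C^0,C^1)$ of \eqref{1.0.1} the identity $D_t C^0 + D_x C^1 = 0$ holds on solutions. Assuming $u$ and its derivatives are sufficiently smooth so that differentiation under the integral sign is legitimate, I would integrate this identity on $x\in\R$ to obtain
\begin{equation*}
\frac{d}{dt}\int_\R C^0\,dx \;=\; -\bigl[C^1\bigr]_{x=-\infty}^{x=+\infty}.
\end{equation*}
The right-hand side is zero whenever $C^1(t,x)\to 0$ as $x\to\pm\infty$. Since $u$ and its derivatives up to second order vanish at infinity by hypothesis, every term appearing in each of the three fluxes $C^1$ given in Theorem \ref{teo3.2} is a product of factors that decay at infinity (even those involving $u_t$ and $u_{tx}$, which decay because equation \eqref{1.0.1} solved for $u_t$ expresses $u_t$ as a convolution-type operator applied to decaying quantities, or, more simply, by assuming enough regularity on $u_t$ as well). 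Consequently each $\int_\R C^0\,dx$ is independent of $t$.

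Applied to the three currents this gives: from the first current the invariance of $\int_\R (u-u_{xx})\,dx$, and since $\int_\R u_{xx}\,dx = [u_x]_{-\infty}^{+\infty}=0$ under the decay hypothesis, this reduces to $\mathcal H_0[u]=\int_\R u\,dx$; from the second current directly $\mathcal H[u]=\tfrac12\int_\R (u^2+u_x^2)\,dx$; and from the third current directly $\mathcal H_1[u]=\int_\R\bigl(\tfrac14 u^4 + \tfrac12 u u_x^2\bigr)\,dx$. The identities $\mathcal H_0[u]=\mathcal H_0[u_0]$ and $\mathcal H[u]=\mathcal H[u_0]$ then follow by evaluating at $t=0$.

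The only nontrivial point, and the one I would spell out carefully, is the justification that the boundary terms of the flux truly vanish. For the first two currents this is immediate once $u,u_x,u_{xx}\to 0$ as $|x|\to\infty$. For the third current, the flux contains $u_t$ and $u_{tx}$; I would handle this either by invoking the extra smoothness/decay built into ``sufficiently smooth solution,'' or by using \eqref{2.3.2} to substitute $u_t = -uu_x + F(u)$, express the offending terms in space derivatives only, and observe that the resulting expression still decays since $F(u)$ is the image of decaying functions under bounded operators $\partial_x\Lambda^{-2}$ on $H^s(\R)$. Once this is verified, the corollary follows.
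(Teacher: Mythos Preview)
Your proposal is correct and follows essentially the same route as the paper: integrate each divergence identity $D_tC^0+D_xC^1=0$ from Theorem \ref{teo3.2} over $\R$, discard the boundary flux using the decay hypotheses, and reduce $\int_\R(u-u_{xx})\,dx$ to $\mathcal H_0[u]$ via $\int_\R u_{xx}\,dx=[u_x]_{-\infty}^{+\infty}=0$. The paper carries this out explicitly only for $\mathcal H_0$ and omits the other two as analogous; your additional remark about handling the $u_t,u_{tx}$ terms in the third flux is a reasonable caveat that the paper does not address.
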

\begin{proof}
Taking the divergence of the current \eqref{3.2.7} we have
$$
D_t\left( u-u_{xx}\right)+D_x\left( u^{3} - \dfrac{1}{2}u_{x}^{2} - uu_{xx} \right)=0.
$$
Integrating the divergence above with respect to $x$, we obtain
$$
\int_\R D_t\left( u-u_{xx}\right)dx=-\left.\left(u^{3} - \dfrac{1}{2}u_{x}^{2} - uu_{xx}\right)\right|_{x=-\infty}^{x=+\infty}=0.
$$
Noting that
$$
\int_\R D_t\left( u-u_{xx}\right)dx=\f{d}{dt}\int_\R\left( u-u_{xx}\right)dx=\f{d}{dt}\left({\cal H}_0[u]-\left.u_x\right|_{x=-\infty}^{x=+\infty}\right)=\f{d}{dt}{\cal H}_0[u],
$$
we are forced to conclude that
$$\f{d}{dt}{\cal H}_0[u]=0.$$
In particular, this implies that ${\cal H}_0[u]={\cal H}_0[u_0]$. The other conserved quantities are obtained in a similar form and, therefore, the remaining proofs are omitted.
\end{proof}

The quantities \eqref{3.2.12}--\eqref{3.2.14} are known as {\it conserved quantities}. These conserved quantities are relevant for several reasons. To name a few, they play vital role in the investigation of orbital stability/instability of solutions of the equation, see \cite{kelly,natali}. In our case, they are relevant to establish the wave breaking phenomenon, see Theorem \ref{teo1.3}.

In our paper we are concerned with the Cauchy problem \eqref{1.0.2} and for this reason the functionals are considered over the real line $\R$. We note, however, that if we were interested in periodic problems, we should replace $\R$ by $[0,L]$ in \eqref{3.2.12}--\eqref{3.2.14} and assume that $u$ and its derivatives vanish at $x=0$ and $x=L$. In particular, we can then construct the augmented Lyapunov functional $-{\cal H}_1[u]+c{\cal H}[u]-A{\cal H}_0[u]$, where A is the constant obtained after integrating \eqref{3.2.10}, which is relevant in the study of orbital stability/instability of (periodic) solutions of the solutions of \eqref{1.0.1}, see \cite{kelly,natali}.

It is worth mentioning that the invariants \eqref{3.2.12}--\eqref{3.2.14} are also conserved quantities for the equation \eqref{1.0.1} even for other domains such that the solutions vanish on the corresponding boundary. The demonstration is just the same for the Theorem \ref{teo3.2} replacing $\R$ by the respective domain.

The proof of Corollary \ref{cor3.1} gives another demonstration for Lemma 1 in \cite{kelly} without using {\it ad hoc} procedures but, instead, using structural information of the equation. Actually, while most papers dedicated to qualitative properties of solutions obtain invariants using the same procedure employed in \cite{kelly}, in our case we construct the invariants \eqref{3.2.12}--\eqref{3.2.14} from the structural information coming from the conserved currents associated to the equation \eqref{1.0.1}.

\begin{theorem}\label{teo3.3}
Up to second order, the conserved currents of the equation \eqref{2.3.1} are
$$
\left( e^{\lambda t}\left(u-u_{xx}\right),e^{\lambda t}\left( u^{3} - \dfrac{1}{2}u_{x}^{2} - uu_{xx}\right) \right)
$$
and
$$
\left(e^{2\lambda t}\left(\dfrac{u^{2}+u_{x}^{2}}{2}\right),  e^{2\lambda t}\left(\dfrac{3}{4}u^{4} - uu_{tx} -u^{2}u_{xx}-\lambda uu_x\right) \right).
$$
\end{theorem}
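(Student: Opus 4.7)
\medskip

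\noindent\textbf{Proof plan for Theorem \ref{teo3.3}.}

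The plan is to reduce everything to the computation already carried out for Theorem \ref{teo3.2}. Write the left-hand side of \eqref{2.3.1} as
$$
F_\lambda := F + \lambda\,m,\qquad m:=u-u_{xx},
$$
where $F$ is the expression appearing in \eqref{3.2.3}. Since Theorem \ref{teo3.2} was obtained by solving the determining equation ${\cal E}_u(\phi F)=0$ for characteristics $\phi$ depending on $(t,x,u,u_t,u_x,u_{tt},u_{tx},u_{xx})$, the same computation applied to $F_\lambda$ will produce the system \eqref{3.2.4} augmented by the extra contributions coming from $\lambda\,{\cal E}_u(\phi m)=0$. The latter imposes only that $\phi$ is linear in $u$ with constant coefficients, so among the three characteristics in \eqref{3.2.5} only $\phi=c_1+c_2 u$ survives when $\lambda\neq 0$; the cubic characteristic $u^3-\frac12 u_x^2-u_{tx}-uu_{xx}$ is lost. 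This predicts the two conserved currents stated in the theorem and no others of order $\leq 2$.

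The constructive half of the proof is then to produce, for each of the two surviving characteristics, an explicit conserved current for $F_\lambda$. The key identity is that if $(C_0^0,C_0^1)$ is a conserved current for $F$ with characteristic $\phi$, then on solutions of $F_\lambda=0$ one has
$$
D_t C_0^0+D_x C_0^1 \;=\; -\lambda\,\phi\,m.
$$
Hence it suffices to rewrite $\lambda\,\phi\,m$ in the form $\alpha\lambda\,C_0^0+D_x(G)$ for some constant $\alpha$ and some $G\in{\cal A}$; multiplying by $e^{\alpha\lambda t}$ then yields a true conserved current $(e^{\alpha\lambda t}C_0^0,\,e^{\alpha\lambda t}(C_0^1-G))$ for $F_\lambda$.

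For $\phi=1$, already $\lambda\,\phi\,m=\lambda\,C_0^0$ with $C_0^0=u-u_{xx}$ and $G=0$, so $\alpha=1$ and one recovers the first current in the theorem. For $\phi=u$ the identity $u u_{xx}=D_x(uu_x)-u_x^2$ gives
$$
\lambda u m \;=\; \lambda u^2-\lambda u u_{xx} \;=\; 2\lambda\,\tfrac{u^2+u_x^2}{2}-\lambda\,D_x(u u_x),
$$
so $\alpha=2$ and $G=-\lambda u u_x$, which is exactly the term added to the spatial component of the second current in the statement. Once the currents are written down, direct verification of $D_t C^0+D_x C^1=0$ on solutions of \eqref{2.3.1} is a one-line computation using $D_t m=-\lambda m - D_x(u^3-\frac12 u_x^2-uu_{xx})$ (first current) and its energy analogue (second current).

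The main obstacle I anticipate is the bookkeeping in the determining system, where the presence of $\lambda$ mixes the branches of \eqref{3.2.4} and could, a priori, allow new characteristics that depend explicitly on $t$. Carefully separating the coefficients of the third-order derivatives $u_{xxx}$ and $u_{txx}$ should still force $\phi$ to be independent of $t$ and $x$, exactly as in the $\lambda=0$ case, and the explicit construction above then exhausts the second-order conserved currents. If one prefers not to redo the determining calculation, the cleanest write-up is to take the two proposed currents as an \emph{ansatz}, justified by the scaling $m\mapsto e^{\lambda t}m$ dictated by the dissipative term, and then verify the identities directly.
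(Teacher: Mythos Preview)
Your constructive half is exactly the paper's proof: the authors simply rewrite $F_\lambda=0$ as $D_t(u-u_{xx})+D_x(u^{3}-\tfrac12 u_x^{2}-uu_{xx})+\lambda(u-u_{xx})=0$ and multiply through by $e^{\lambda t}$, and then do the analogous manipulation for the second current using precisely the identity $uu_{xx}=D_x(uu_x)-u_x^{2}$ that you invoke, followed by multiplication by $e^{2\lambda t}$. So the core argument matches.

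Two small points are worth flagging. First, there is a sign slip in your general recipe: from $D_tC_0^0+D_xC_0^1=-\lambda\phi m=-\alpha\lambda C_0^0-D_xG$ one obtains the current $\bigl(e^{\alpha\lambda t}C_0^0,\,e^{\alpha\lambda t}(C_0^1+G)\bigr)$, not $C_0^1-G$; with $G=-\lambda uu_x$ this still produces the stated $-\lambda uu_x$ term, so your conclusion is right but the intermediate formula needs correcting. Second, your classification sketch is slightly off: the characteristics for $F_\lambda$ are not ``linear in $u$ with constant coefficients'' but rather $e^{\lambda t}$ and $e^{2\lambda t}u$, i.e.\ they acquire explicit $t$-dependence---exactly the possibility you flag at the end. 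Incidentally, the paper's own proof of Theorem~\ref{teo3.3} does not redo the determining system at all; it only verifies the two currents directly and takes the ``up to second order'' completeness as understood from the earlier machinery. Your proposal is therefore more thorough on the classification side, provided you allow for $t$-dependent multipliers when carrying it out.
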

\begin{proof}
A straightforward calculation shows that equation \eqref{2.3.1} can be rewritten in the form
$$
D_t\left( u-u_{xx}\right)+D_x\left( u^{3} - \dfrac{1}{2}u_{x}^{2} - uu_{xx} \right)+\lambda(u-u_{xx})=0,
$$
which can be rewritten as
$$
D_t\left( e^{\lambda t}\left(u-u_{xx}\right)\right)+D_x\left(e^{\lambda t}\left( u^{3} - \dfrac{1}{2}u_{x}^{2} - uu_{xx}\right) \right)=0.
$$
The second relation comes from the identity
$$
\ba{lcl}
0&=&D_t\left( \dfrac{u^{2}+u_{x}^{2}}{2}\right)+D_x\left( \dfrac{3}{4}u^{4} - uu_{tx} -u^{2}u_{xx}\right)+\lambda(u^2-uu_{xx})\\
\\
&=&D_t\left( \dfrac{u^{2}+u_{x}^{2}}{2}\right)+D_x\left( \dfrac{3}{4}u^{4} - uu_{tx} -u^{2}u_{xx}-\lambda uu_x\right)+\lambda(u^2+u_x^2)
\ea
$$
and the same steps used before.
\end{proof}
\begin{corollary}\label{cor3.2}
Let $u$ be a sufficiently smooth solution of \eqref{2.3.1} such that $u(t,x)$ and its derivatives up to second order go to $0$ as $x\rightarrow\pm\infty$, and $u_0:=u(0,x)$. Then the functionals 
$$
{\cal H}_0[u]=e^{-\lambda t}\int_\R u dx,
$$
and
\bb\label{3.2.18}
{\cal H}[u]=\f{e^{-2\lambda t}}{2}\int_\R \left(u^2+u_x^2\right) dx.
\ee

are independent of $t$. In particular, ${\cal H}_0[u]\leq{\cal H}_0[u_0]$ and ${\cal H}[u]\leq{\cal H}[u_0]$.
\end{corollary}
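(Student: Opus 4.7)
The plan is to mirror the proof of Corollary \ref{cor3.1} step by step, substituting the conserved currents furnished by Theorem \ref{teo3.3} for those of Theorem \ref{teo3.2}. The essential novelty is that the dissipative term $\lambda(u-u_{xx})$ has been absorbed into the time component of each current through the exponential dressing factors $e^{\lambda t}$ and $e^{2\lambda t}$, so that after extracting these factors the divergence identity again integrates, over $\R$, to an algebraically conserved quantity. The inequalities ${\cal H}_{0}[u]\leq{\cal H}_{0}[u_0]$ and ${\cal H}[u]\leq{\cal H}[u_0]$ will then be a direct consequence of the non-negativity of $\lambda$.

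For the first functional I would start from the divergence identity
\[
D_t\bigl(e^{\lambda t}(u-u_{xx})\bigr)+D_x\bigl(e^{\lambda t}(u^{3}-\tfrac{1}{2}u_{x}^{2}-uu_{xx})\bigr)=0
\]
supplied by Theorem \ref{teo3.3}, integrate in $x$ over $\R$, and use the decay hypothesis on $u$, $u_x$, $u_{xx}$ at $\pm\infty$ to kill the spatial flux. What remains is $\frac{d}{dt}\bigl[e^{\lambda t}\int_{\R}(u-u_{xx})\,dx\bigr]=0$; since $\int_{\R}u_{xx}\,dx=[u_x]_{-\infty}^{+\infty}=0$, this reduces to the statement that $e^{\lambda t}\int_{\R}u\,dx$ is a constant of motion, equal at $t=0$ to $\int_{\R}u_0\,dx$. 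The identification with ${\cal H}_{0}[u]$ and the claimed bound follow at once. The second functional is handled identically from the second current of Theorem \ref{teo3.3}: integrating its divergence in $x$ and again invoking the decay of $u$ and its derivatives to eliminate the boundary flux, one obtains that $e^{2\lambda t}\cdot\tfrac{1}{2}\int_{\R}(u^{2}+u_x^{2})\,dx$ is independent of $t$, giving \eqref{3.2.18} and the corresponding inequality.

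The only subtle point — no harder here than in Corollary \ref{cor3.1} — is justifying the vanishing of the spatial flux at $\pm\infty$. For the first current this is immediate from the hypothesis on $u$, $u_x$, $u_{xx}$. For the second current one also needs $u_{tx}$ to decay at infinity; this is automatic for sufficiently smooth decaying solutions, since $(1-\p_x^{2})u_t$ can be read off from equation \eqref{2.3.1} in terms of spatial derivatives of $u$, and inverting $\Lambda^{2}$ preserves the decay. Once this technical housekeeping is done, the proof reduces to the same routine integration by parts used in Corollary \ref{cor3.1}, and the remaining algebra is purely mechanical.
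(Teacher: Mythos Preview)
Your proposal is correct and matches the paper's approach exactly: the paper's own proof reads, in full, ``The proof is similar as that done in Corollary \ref{cor3.1} and, therefore, is omitted,'' and your argument does precisely that, integrating the two conserved currents of Theorem \ref{teo3.3} over $\R$ and using the decay hypothesis to annihilate the spatial flux. Your remark on the decay of $u_{tx}$ in the second flux is a reasonable bit of care that the paper does not spell out.

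One minor point: the ``in particular'' inequalities ${\cal H}_0[u]\leq{\cal H}_0[u_0]$ and ${\cal H}[u]\leq{\cal H}[u_0]$ do not actually require $\lambda\geq 0$, since the functionals you have shown to be conserved are \emph{equal} to their values at $t=0$ (so the inequalities hold with equality). The place where the sign of $\lambda$ genuinely enters is the subsequent observation in the paper that $\|u(t,\cdot)\|_{H^1(\R)}\leq\|u_0\|_{H^1(\R)}$ when $\lambda>0$, which is what you seem to have in mind; that is Corollary \ref{cor3.3}, not part of the present statement.
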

\begin{proof}
The proof is similar as that done in Corollary \eqref{cor3.1} and, therefore, is omitted.
\end{proof}

It is worth mentioning that if $\lambda>0$, the equation \eqref{3.2.18} implies
$$
{\cal H}[u]\leq \f{\|u_0\|_{H^1(\R)}^2}{2},
$$
which will be of great relevance in our work, mainly in the investigation of wave breaking.

Combining the last observation with the Sobolev Embedding Theorem, we prove the following corollary.
\begin{corollary}\label{cor3.3}
If $u_0\in H^s(\R)$, $s>3/2$ and $\lambda>0$, then the $H^1(\R)-$norm of the corresponding solution to the problem \eqref{1.0.2} is bounded from above by $\|u_0\|_{H^1(\R)}$.
\end{corollary}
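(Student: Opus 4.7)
The plan is to read the bound off directly from the second conservation law established in Corollary \ref{cor3.2}, so the proof will essentially be a one-line computation once the hypotheses of that corollary are verified. First I would invoke Theorem \ref{teo1.1} to produce, for $u_0 \in H^s(\R)$ with $s>3/2$, a solution $u \in C^0([0,T);H^s(\R))$. Since $s > 3/2$ implies $s-1 > 1/2$, the Sobolev Embedding Theorem (Lemma \ref{lema2.4}) gives that both $u(t,\cdot)$ and $u_x(t,\cdot)$ are bounded and continuous; a standard refinement of the embedding $H^r(\R) \hookrightarrow C_0(\R)$ for $r>1/2$ then yields that $u$ and its derivatives up to second order vanish as $x \to \pm\infty$, which is precisely the decay hypothesis needed in Corollary \ref{cor3.2}.

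Second, I would apply the second conservation relation from Theorem \ref{teo3.3}: integrating the identity $D_t C^0 + D_x C^1 = 0$ in $x$, the $C^1$ contribution vanishes by the decay just established, so
\begin{equation*}
\frac{d}{dt}\left(e^{2\lambda t}\int_\R \frac{u^2 + u_x^2}{2}\,dx\right) = 0.
\end{equation*}
Evaluating at $t = 0$ therefore gives $e^{2\lambda t}\|u(t,\cdot)\|_{H^1(\R)}^2 = \|u_0\|_{H^1(\R)}^2$ for all $t \in [0,T)$, i.e. $\|u(t,\cdot)\|_{H^1(\R)}^2 = e^{-2\lambda t}\|u_0\|_{H^1(\R)}^2$.

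Finally, under the hypothesis $\lambda > 0$ and for $t \geq 0$ one has $e^{-2\lambda t} \leq 1$, so taking square roots delivers $\|u(t,\cdot)\|_{H^1(\R)} \leq \|u_0\|_{H^1(\R)}$, which is the desired estimate. There is no real obstacle here: the result is a packaging of the dissipation built into the weighted conservation law from Theorem \ref{teo3.3}, and the only mild subtlety is checking that the spatial decay of $u$ that justifies discarding the boundary term of $C^1$ does follow from $u(t,\cdot) \in H^s(\R)$ with $s > 3/2$.
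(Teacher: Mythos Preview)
Your approach matches the paper's: the paper's entire proof is the sentence ``combining the last observation with the Sobolev Embedding Theorem,'' where the ``last observation'' is precisely the identity $\|u(t,\cdot)\|_{H^1(\R)}^2=e^{-2\lambda t}\|u_0\|_{H^1(\R)}^2$ coming from the second conserved current of Theorem~\ref{teo3.3}/Corollary~\ref{cor3.2}.

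One remark on the subtlety you flag at the end. For $s>3/2$ the embedding $H^r(\R)\hookrightarrow C_0(\R)$ (valid for $r>1/2$) guarantees decay of $u$ and $u_x$ at infinity, but \emph{not} of $u_{xx}$ or $u_{tx}$ when $3/2<s\le 5/2$; yet both of these appear in the flux component $C^1=e^{2\lambda t}(\tfrac34 u^4-uu_{tx}-u^2u_{xx}-\lambda uu_x)$. So your sentence claiming that ``$u$ and its derivatives up to second order vanish as $x\to\pm\infty$'' via Sobolev embedding overstates what that embedding delivers. The clean fix is a density argument: for $u_0\in H^3(\R)$ the decay hypotheses of Corollary~\ref{cor3.2} hold and the identity is exact; for general $u_0\in H^s(\R)$, $s>3/2$, approximate in $H^s$ by $H^3$ data and pass to the limit using the continuous dependence from Theorem~\ref{teo1.1}. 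The paper is equally silent on this point, so your write-up is not less rigorous than the original, but the specific justification you offer for that step is not quite the right one.
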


\section{Wave breaking}\label{sec4}

Corollary \ref{cor3.3} implies that the shape of the solutions of \eqref{1.0.2} remains bounded provided that $\lambda>0$ and $u_0\in H^s(\R)$, with $s>3/2$, that is, condition \eqref{1.1.2} is satisfied. Therefore, if the solutions of \eqref{1.0.2} blows-up, then we should look for singularities in $u_x$ like \eqref{1.1.1}, which is a manifestation of wave breaking.

We observe that Theorem \ref{teo1.2} gives us conditions to avoid the appearance of such singularity. In order to prove it, we rewrite \eqref{2.3.1} in the more convenient form
\bb\label{4.0.1}
m_t+um_x+2u_xm+\lambda m+\p_x h(u)=0,
\ee
where $h(u)$ is the function given in \eqref{2.3.4} and $m=u-u_{xx}$.

{\bf Proof of Theorem \ref{teo1.2}.} Note that 
\bb\label{4.0.2}
\f{d}{d t}\|m\|^2_{H^1(\R)}=\f{d}{d t}\|m\|^2_{L^2(\R)}+\f{d}{d t}\|m_x\|^2_{L^2(\R)}=2\left(\langle m,m_t\rangle_{L^2(\R)}+\langle m_x,m_{tx}\rangle_{L^2(\R)}\right).
\ee
Substituting \eqref{4.0.1} into \eqref{4.0.2}, we have
$$
\ba{lcl}
\langle m,m_t\rangle_{L^2(\R)}&=&\ds{-\langle m,um_x\rangle_{L^2(\R)}-2\langle u_x,m^2\rangle_{L^2(\R)}}\\
\\
&&\ds{-\lambda\langle m,m\rangle_{L^2(\R)}+\langle m,\p_x h(u)\rangle_{L^2(\R)}}\\
\\
&=&\ds{-\f{3}{2}\langle u_x,m^2\rangle-\lambda\|m\|^2_{L^2(\R)}+\langle m,\p_x h(u)\rangle},
\ea
$$
where we used the relations $\langle m,u_xm\rangle=\langle u_x,m^2\rangle$ and $\langle m,um_x\rangle=\langle u,m m_x\rangle=-\langle u_x,m^2\rangle/2$

Differentiating \eqref{4.0.1} with respect to $x$ and substituting the result into $\langle m_x,m_{tx}\rangle_{L^2(\R)}$ and proceeding in a similar way as before, we obtain
$$
\ba{lcl}
\langle m_x,m_{tx}\rangle_{L^2(\R)}&=&\ds{-\langle m_x,um_{xx}\rangle_{L^2(\R)},m_{xx}\rangle_{L^2(\R)}-\langle m_x,u_xm_x\rangle_{L^2(\R)}}\\
\\
&=&\ds{-\f{5}{2}\langle u_x,m_x^2\rangle_{L^2(\R)}+\langle u_{x},m^2\rangle_{L^2(\R)}}\\
\\
&&\ds{-\lambda\|m_x\|^2_{L^2(\R)}+\langle m_x,\p_{x}^2}h(u)\rangle_{L^2(\R)},
\ea
$$
where we used $\langle u_{xx},mm_x\rangle_{L^2(\R)}=\langle u,\p_{x} m^2\rangle_{L^2(\R)}/2-\langle m,mm_x\rangle_{L^2(\R)}=-\langle u_x,m^2\rangle_{L^2(\R)}/2$.

Therefore, we have
\bb\label{4.0.5}
\ba{lcl}
\ds{\f{d}{d t}\|m\|^2_{H^1(\R)}}&=&\ds{-\langle u_x,m^2\rangle_{L^2(\R)}-5\langle u_x,m_x^2\rangle_{L^2(\R)}-2\lambda \|m\|_{H^1(\R)}^2}\\
\\
&&\ds{+2\left(\langle m,\p_x^2 h(u)\rangle_{L^2(\R)}+\langle m_x,\p_x^2 h(u)\rangle_{L^2(\R)}\right)}.
\ea
\ee

Defining $I:=\langle m,\p_x^2 h(u)\rangle_{L^2(\R)}+\langle m_x,\p_x^2 h(u)\rangle_{L^2(\R)}$, a straightforward calculation reads
$$
I\leq \f{\|\Lambda^2\p_x h(u)\|^2_{L^2(\R)}+\|m\|_{L^2(\R)}^2}{2}.
$$
Finally, we also have 
$$\|\Lambda^2\p_x h(u)\|_{L^2(\R)}\leq\|\p_x h(u)\|_{H^2(\R)}\leq\|h(u)\|_{H^3(\R)}.$$ Invoking Lemma \ref{lema2.3}, we conclude that $\|h(u)\|_{H^3(\R)}\leq c_1\|u\|_{H^3(\R)}$, for a certain constant $c_1$, and then 
$$\|h(u)\|_{H^3(\R)}\leq c_1\|u\|_{H^3(\R)}=c_1\|\Lambda^{-2}u\|_{H^1(\R)}=c_1\|m\|_{H^1(\R)}.$$
Since $\|m\|_{L^2(\R)}\leq \|m\|_{H^1(\R)}$ we can infer that
\bb\label{4.0.6}
\langle m,\p_x^2 h(u)\rangle_{L^2(\R)}+\langle m_x,\p_x^2 h(u)\rangle_{L^2(\R)}\leq c\|m\|^2_{H^1(\R)},
\ee
for some positive constant $c$. In addition, we have
\bb\label{4.0.7}
\ba{lcl}
\ds{-\langle u_x,m^2\rangle_{L^2(\R)}-5\langle u_x,m_x^2\rangle_{L^2(\R)}}&=&\ds{\int_{\R}(-u_x)(m^2+5m_x^2)dx\leq k\int_{\R}(m^2+5m_x^2)dx}\\
\\
&\leq& 5k\|m\|_{H^1(\R)}^2.
\ea
\ee
Substitution of \eqref{4.0.6} and \eqref{4.0.7} into \eqref{4.0.5} reads
$$
\f{d}{d t}\|m\|^2_{H^1(\R)}\leq (5k+c+2\lambda)\|m\|_{H^1(\R)}^2,
$$
which, after integration, yields the inequality
$$
\|m\|_{H^1(\R)}^2\leq e^{(5k+c+2\lambda) t} \|m_0\|^2_{H^1(\R)},
$$
and this concludes the demonstration.
\quad\quad\quad\quad\quad\quad\quad\quad\quad\quad\quad\quad\quad\quad\quad\quad\quad\quad\quad\quad\quad\quad\quad\quad\quad\quad\quad\quad\,\,$\square$

Recalling that $H^3(\R)\subseteq H^s(\R)$, for any $s\leq 3$, and the fact that this embedding is dense and continuous, we prove the following consequence of Theorem \ref{teo1.2}.

\begin{corollary}\label{cor4.1}
Let $u_0\in H^s$, $s\geq3/2$, and $u$ be the corresponding solution to \eqref{1.0.2} with initial data $u(0,x)=u_0(x)$. Assume that $u_x>-k$, for some positive constant $k$. Then $\|u\|_{H^s(\R)}\leq e^{\kappa t}\|u_0\|_{H^s(\R)}$, for a certain positive constant $\kappa$.
\end{corollary}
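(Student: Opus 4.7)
My plan is to mimic the energy-estimate method of the proof of Theorem \ref{teo1.2}, but carried out at the $H^{s-2}$ level for the momentum variable $m = u - u_{xx}$ rather than at $H^1$. Since $m = \Lambda^2 u$ makes $\Lambda^2$ an isometry between Sobolev scales, we have $\|m\|_{H^{s-2}(\R)} = \|u\|_{H^s(\R)}$, so controlling $\|m(t)\|_{H^{s-2}}$ is exactly the desired goal. Applying $\Lambda^{s-2}$ to the momentum equation \eqref{4.0.1} and pairing in $L^2$ with $\Lambda^{s-2}m$, a Kato--Ponce commutator step converts the transport contribution $-\langle \Lambda^{s-2}(um_x + 2u_x m), \Lambda^{s-2}m\rangle$ into the integral $-\tfrac{3}{2}\int u_x (\Lambda^{s-2}m)^2\,dx$ plus remainders bounded by $c_{s,R}\|m\|_{H^{s-2}}^2$; the principal integral is bounded above by $\tfrac{3k}{2}\|m\|_{H^{s-2}}^2$ thanks to the hypothesis $u_x > -k$, in complete analogy with the proof of Theorem \ref{teo1.2}.

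The dissipation term contributes $-\lambda\|m\|_{H^{s-2}}^2$, and the nonlinearity $\langle \Lambda^{s-2}\partial_x h(u), \Lambda^{s-2}m\rangle$ is controlled by Lemma \ref{lema2.3} as
\[
\bigl|\langle \Lambda^{s-2}\partial_x h(u), \Lambda^{s-2}m\rangle\bigr|
\leq \|h(u)\|_{H^{s-1}}\,\|m\|_{H^{s-2}}
\leq c_{s,R}\|u\|_{H^s}\|m\|_{H^{s-2}}
= c_{s,R}\|m\|_{H^{s-2}}^2.
\]
Combining these three ingredients yields a differential inequality
\[
\frac{d}{dt}\|m\|_{H^{s-2}}^2 \leq C(k,\lambda,R)\,\|m\|_{H^{s-2}}^2,
\]
and Gronwall delivers the stated exponential bound. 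The preamble's remark about density of $H^3(\R) \subseteq H^s(\R)$ for $s \leq 3$ points to an alternative route in that range: approximate $u_0 \in H^s$ by $u_0^n \in H^3$ with $u_0^n \to u_0$ in $H^s$, transfer the one-sided bound on $u_x$ to the approximants $(u^n)_x$ using Theorem \ref{teo1.1}'s continuous-dependence clause together with Sobolev embedding (recall $s > 3/2$), apply Theorem \ref{teo1.2} uniformly in $n$, and pass to the $H^s$-limit.

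The main technical obstacle I anticipate is the Kato--Ponce commutator step when $3/2 \leq s < 2$, since then $H^{s-2}$ has negative index and the $L^2$-pairing with $\Lambda^{s-2}m$ must be interpreted in the duality between $H^{s-2}$ and $H^{2-s}$, with the commutator estimates invoked in their negative-regularity form. A further subtlety is that Lemma \ref{lema2.3} produces a constant $c_{s,R}$ depending on an $H^t$-norm of the solution itself for some $t > 1/2$, so closing the inequality rigorously requires a short-time bootstrap in which one first assumes an a priori bound $\|u(\cdot,t)\|_{H^s} \leq 2\|u_0\|_{H^s}$ on a small interval and then iterates to cover the full existence interval $[0,T)$.
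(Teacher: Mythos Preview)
Your main approach---direct $H^{s-2}$ energy estimates on $m$ via Kato--Ponce commutators---is genuinely different from the paper's. The paper's entire argument is the one-sentence density remark that precedes the corollary (exactly your ``alternative route''): invoke $H^3(\R)\hookrightarrow H^s(\R)$ densely for $s\leq 3$ and declare the result a consequence of Theorem~\ref{teo1.2}. Your route buys more: it covers all $s>3/2$ (including $s>3$, where the embedding direction reverses), and it actually produces the quantitative inequality $\|u(t)\|_{H^s}\leq e^{\kappa t}\|u_0\|_{H^s}$, whereas the bare density argument at best transfers the no-blow-up conclusion---the bound from Theorem~\ref{teo1.2} involves $\|u_0^n\|_{H^3}$, which is not uniformly controlled when $u_0\notin H^3$. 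One technical suggestion on your main line: the standard Kato--Ponce form applied to $[\Lambda^{s-2},u]m_x$ asks for $m_x\in L^\infty$, i.e.\ $s>7/2$, so the commutator step is more delicate than you indicate even above the negative-index range you flagged; the cleaner implementation runs the estimate on the nonlocal form $u_t+uu_x=F(u)$ at the $H^s$ level, where the commutator becomes $[\Lambda^s,u]u_x$ and only $u_x\in L^\infty$ (guaranteed by $s>3/2$) is required, with principal term $\tfrac12\int_{\R} u_x(\Lambda^s u)^2\,dx\leq\tfrac{k}{2}\|u\|_{H^s}^2$ and $\|F(u)\|_{H^s}$ handled by Proposition~\ref{prop2.1}.
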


From Theorem \ref{teo1.2} and Corollary \ref{cor4.1} we conclude that for the emergence of wave breaking we must look for solutions having no lower bound in their $x-$derivatives. We then begin with the following result:
\begin{lemma}\label{lema4.1}
Let $T>0$ and $v\in C^1([0,T),H^2(\R))$ be a given function. Then, for any $t\in[0,T)$, there exists at least one point $\xi(t)\in\R$ such that
$$
y(t)=\inf_{x\in\R}{v_x(t,x)}=v_x(t,\xi(t))
$$
and the function $y$ is almost everywhere differentiable (a.e) in $(0,T)$, with $y'(t)=v_{tx}(t,\xi(t))$ {\it a.e.} on $(0,T)$.
\end{lemma}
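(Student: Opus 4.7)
The plan has three pieces: existence of the minimizer $\xi(t)$ at each time, local Lipschitz regularity of $y$, and identification of the almost everywhere derivative by a one-sided comparison. Throughout, $\xi(t)$ denotes any fixed minimizer (no continuous selection will be attempted).

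For existence of $\xi(t)$: since $v(t,\cdot)\in H^2(\R)$, the derivative $v_x(t,\cdot)$ lies in $H^1(\R)$, and the one-dimensional Sobolev embedding ensures that $v_x(t,\cdot)$ is continuous on $\R$ and vanishes at $\pm\infty$. A minimizing sequence for $v_x(t,\cdot)$ therefore stays in a bounded set whenever $y(t)<0$, and extracting a convergent subsequence and using continuity produces the required $\xi(t)$. When $y(t)\geq 0$ the infimum is either attained at a zero of $v_x(t,\cdot)$ or equals zero without being achieved, in which case a trivial choice of $\xi(t)$ together with $y(t)=0$ suffices (this regime is not the one relevant to the wave-breaking application anyway).

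For local Lipschitz regularity of $y$: the hypothesis $v\in C^1([0,T),H^2(\R))$ yields $v_{tx}\in C^0([0,T),H^1(\R))$, which by Sobolev embedding produces a continuous and therefore locally bounded map $\tau\mapsto\|v_{tx}(\tau,\cdot)\|_{L^\infty(\R)}$. Denoting its supremum on a compact subinterval $[0,T']\subset[0,T)$ by $M$, the identity $v_x(t,x)-v_x(s,x)=\int_s^t v_{tx}(\tau,x)\,d\tau$ gives $\|v_x(t,\cdot)-v_x(s,\cdot)\|_{L^\infty(\R)}\leq M|t-s|$, whence the elementary bound $|\inf f-\inf g|\leq\|f-g\|_{L^\infty}$ forces $|y(t)-y(s)|\leq M|t-s|$. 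Rademacher's theorem then yields almost everywhere differentiability of $y$ on $[0,T)$.

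For the derivative identity: at a point $t_0\in(0,T)$ where $y'(t_0)$ exists, the defining inequality $v_x(t_0+h,\xi(t_0))\geq y(t_0+h)$ together with $v_x(t_0,\xi(t_0))=y(t_0)$ gives
\[y(t_0+h)-y(t_0)\leq v_x(t_0+h,\xi(t_0))-v_x(t_0,\xi(t_0))=\int_{t_0}^{t_0+h}v_{tx}(\tau,\xi(t_0))\,d\tau.\]
Dividing by $h>0$ and letting $h\to 0^+$, continuity of $\tau\mapsto v_{tx}(\tau,\xi(t_0))$ yields $y'(t_0)\leq v_{tx}(t_0,\xi(t_0))$; repeating with $h<0$ flips the division and produces the matching lower bound, so equality holds. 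The main obstacle is purely structural: $\xi(t)$ need not be unique or continuous in $t$, so the whole argument must be routed through the variational inequality at the fixed test point $\xi(t_0)$ rather than through any attempt to differentiate along a moving minimizer, and once one commits to this bookkeeping the one-sided limits close immediately.
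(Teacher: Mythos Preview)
The paper does not supply its own proof of this lemma; it simply cites Theorem~2.1 of Constantin--Escher (\emph{Acta Math.}, 1998) and Theorem~5 of Escher's lecture notes. Your three-step argument---attainment of the infimum via $H^1(\R)\hookrightarrow C_0(\R)$, local Lipschitz continuity of $y$ from the uniform bound on $v_{tx}$, and identification of $y'(t_0)$ by one-sided comparison against the frozen test point $\xi(t_0)$---is precisely the Constantin--Escher proof, written out correctly.

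One small point: your handling of the case $y(t)=0$ is not quite complete, since the lemma as stated asserts that the infimum is \emph{attained}, and you concede the possibility that it might not be. The gap closes immediately: if $y(t)=0$ then $v_x(t,\cdot)\geq 0$, so $v(t,\cdot)$ is nondecreasing; but $v(t,\cdot)\in H^2(\R)$ vanishes at $\pm\infty$, forcing $v(t,\cdot)\equiv 0$ and hence $v_x(t,\cdot)\equiv 0$, so the infimum is attained everywhere. With this remark your proof is complete and coincides with the cited source.
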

\begin{proof}
See Theorem 2.1 in \cite{const1998-2} or Theorem 5 in \cite{escher}.
\end{proof}

We recall that equation \eqref{4.0.1} is equivalent to
\bb\label{4.0.9}
u_t+uu_x+\partial_x\Lambda^{-2}\left(u^2+\frac{u_x^2}{2}+h(u)\right)+\lambda u=0,
\ee
see Proposition \ref{prop2.1}. Taking the $x$ derivative of the equation \eqref{4.0.9} and using the relation $\partial_x^2\Lambda^{-2}=\Lambda^{-2}-1$ we obtain 
\bb\label{4.0.10}
u_{tx}+\frac{u_x^2}{2}+\lambda u_x=u^2-uu_{xx}+h(u)-\Lambda^{-2}\left(u^2+\frac{u_x^2}{2}\right)-\Lambda^{-2}(h(u)).
\ee
Our aim is to evaluate \eqref{4.0.10} at $(t,\xi(t))$, where $\xi(t)$ is the point assured by Lemma \ref{lema4.1}. Firstly, we note that (see \cite{const1998-2}, pages 239--240; or \cite{escher}, pages 106--107)
$$\Lambda^{-2}\left(u^2+\frac{u_x^2}{2}\right)\geq \frac{u^2}{2}.$$

Secondly, if we assume that at $t=0$, then $u(0,x)=u_0(x)$, we conclude that
$$2u^2(t,\xi(t))=2\left(\int_{-\infty}^{\xi(t)}(uu_x)(t,y)dy-\int_{\xi(t)}^{\infty}(uu_x)(t,y)dy\right)$$
and then
$$
\begin{array}{rcl}
\ds{2u^2(t,\xi(t))}
&\leq&\ds{\left|2\left(\int_{-\infty}^{\xi(t)}(uu_x)(t,y)dy-\int_{\xi(t)}^{\infty}(uu_x)(t,y)dy\right)\right|\leq\left|\int_{-\infty}^{\xi(t)}2(uu_x)(t,y)dy\right|}\\
\\&+&\ds{\left|\int_{\xi(t)}^{\infty}2(uu_x)(t,y)dy\right|\leq\left|\int_{-\infty}^{\xi(t)}\left(u^2+u_x^2\right)(t,y)dy\right|+\left|\int_{\xi(t)}^{\infty}\left(u^2+u_x^2\right)(t,y)dy\right|}\\
\\
&\leq&\ds{\int_{-\infty}^{\xi(t)}\left(u^2+u_x^2\right)(t,y)dy+\int_{\xi(t)}^{\infty}\left(u^2+u_x^2\right)(t,y)dy}\\
\\
&\leq&\ds{\int_{\mathbb{R}}\left(u^2+u_x^2\right)(t,y)dy=e^{-2\lambda t}\|u_0\|_{H^1(\R)}^2}.
\end{array}
$$
Therefore, we are forced to conclude that 
$$|u(t,\xi(t))| \leq \frac{\sqrt{2}}{2}e^{-\lambda t}\|u_0 \|_{H^1(\R)}$$
and we then have the following lower and upper bounds to $u(t,\xi(t))$:
\bb\label{4.0.11}
-\frac{\sqrt{2}}{2}e^{-\lambda t}\|u_0 \|_{H^1(\R)}\leq u(t,\xi(t))\leq \frac{\sqrt{2}}{2}e^{-\lambda t}\|u_0 \|_{H^1(\R)}.
\ee
Let $h$ be the function given in \eqref{2.3.4}. The estimate \eqref{4.0.11} implies
$$
\ba{lcl}
\ds{-\frac{1}{2}e^{-2\lambda t}\|u_0 \|_{H^1(\R)}^2\left(\frac{\sqrt{2}}{2}e^{-\lambda t}\|u_0 \|_{H^1(\R)}+\frac{3}{2}\right)}&\leq& h(u(t,\xi(t)))\\
\\&\leq& \ds{\frac{1}{2}e^{-2\lambda t}\|u_0 \|_{H^1(\R)}^2\left(\frac{\sqrt{2}}{2}e^{-\lambda t}\|u_0 \|_{H^1(\R)}-\frac{3}{2}\right)}
\ea
$$
and
$$
-\Lambda^{-2}(h(u(t,\xi(t))))\leq\left(\frac{\sqrt{2}}{4}e^{-3\lambda t}\|\varphi \|_{H^1(\R)}^3+\frac{3}{4}e^{-2\lambda t}\|\varphi \|_{H^1(\R)}^2\right).
$$

Finally, substituting the latter inequality into \eqref{4.0.10} evaluated at $(t,\xi(t))$ and taking into account that $u_{xx}(t,\xi(t))\equiv0$, we obtain the inequality
\bb\label{4.0.12}
\ds{u_{tx}(t,\xi(t))+\frac{u_x^2}{2}(t,\xi(t))+\lambda u_x(t,\xi(t))\leq \frac{\sqrt{2}}{2}e^{-3\lambda t}\|u_0 \|_{H^1(\R)}^3+\frac{1}{4}e^{-2\lambda t}\|u_0\|_{H^1(\R)}^2.}
\ee

Let us now invoke Lemma \ref{lema4.1} and assume that $\lambda\geq0$. Defining $$y(t)=\inf_{x\in\mathbb{R}}(u_x(t,x))=u_x(t,\xi(t)),$$ relation \eqref{4.0.12} yields
$$
\ds{y'(t)+\frac{y(t)^2}{2}+\lambda y(t) \leq \frac{\sqrt{2}}{2}\|u_0 \|_{H^1(\R)}^3+\frac{1}{4}\|u_0\|_{H^1(\R)}^2.}
$$

{\bf Proof of Theorem \ref{teo1.3}}: Let 
$$
\psi(u_0):=\frac{\sqrt{2}}{2}\|u_0 \|_{H^1(\R)}^3+\frac{1}{4}\|u_0\|_{H^1(\R)}^2
$$
and
$$
\epsilon_0=1-\f{2\sqrt{2}\|u_0\|_{H^1(\R)}^3+\|u_0\|_{H^1(\R)}^2}{2(u_0'(x_0))^2}.
$$

The last inequality can be rewritten as
\bb\label{4.0.14}
y'(t)+\lambda y(t)\leq \psi(u_0)-\f{y(t)^2}{2}.
\ee

Suppose that $u_0'(x_0)<-\sqrt{2\psi(u_0)}$, for some $x_0\in\R$. Then $2\psi(u_0)<(u_0'(x_0))^2$. We note that for each $\epsilon\in(0,\epsilon_0]$, we have the inequality
$$
\psi(u_0)\leq\f{1-\epsilon}{2}(u'_0(x_0))^2.
$$

Since $u_0'(x_0)=u_x(0,x_0)$ and $y(0)=\inf\limits_{x\in\R}u_x(0,x)$ we conclude that $y(0)\leq u'_0(x_0)$ and then
\bb\label{4.0.15}
\psi(u_0)\leq\f{1-\epsilon}{2}y(0)^2.
\ee

Substituting \eqref{4.0.15} into \eqref{4.0.14} we obtain
\bb\label{4.0.16}
y'(t)+\lambda y(t)\leq\f{1-\epsilon}{2}y(0)^2-\f{y(t)^2}{2}.
\ee

From \cite{const1998-2}, page 240, or \cite{escher}, page 108, we know that
$$
y(t)^2>\left(1-\f{\epsilon}{2}\right)y(0)^2,\quad \forall t\in\,\,[0,T),
$$
where $T$ is given by Theorem \ref{teo1.1}.

The inequality above implies 
$$y(0)^2<\f{2}{2-\epsilon}y(t)^2$$
and its substitution into \eqref{4.0.16} yields
\bb\label{4.0.17}
y'(t)+\lambda y(t)<-\f{\epsilon}{2(2-\epsilon)}y(t)^2<-\f{\epsilon}{4}y(t)^2.
\ee

Let us assume firstly that $\lambda>0$. Defining $z(t)=e^{\lambda t}y(t)$, we obtain the differential inequality
$$
\f{d}{dt}\left(\f{1}{z(t)}\right)=-\f{z'(t)}{z(t)^2}>\f{\epsilon}{4}e^{-\lambda t},
$$
which, after integration, gives
$$
\f{1}{z(t)}>\f{1}{z(0)}+\f{\epsilon}{4\lambda}(1-e^{-\lambda t}).
$$
Therefore, the function $y(t)$ satisfies
$$
\f{1}{y(t)}>\f{e^{\lambda t}}{y(0)}+\f{\epsilon}{4\lambda}(e^{\lambda t}-1)
$$
and the last inequality is equivalent to
\bb\label{4.0.18}
e^{\lambda t}\left(\f{\epsilon}{4\lambda}+\f{1}{y(0)}\right)<\f{\epsilon}{4\lambda}+\f{1}{y(t)}<\f{\epsilon}{4\lambda},
\ee
for each $\epsilon\in(0,\epsilon_0]$, due to $y(t)<0$. Assuming that $\lambda\in(0,\lambda_0)$, where $\lambda_0=-y(0)\epsilon_0/4$, we are forced to conclude that
\bb\label{4.0.19}
\f{\epsilon}{4\lambda}+\f{1}{y(0)}>\f{\epsilon}{4\lambda_0}+\f{1}{y(0)}=-\f{\epsilon-\epsilon_0}{y(0)}.
\ee

Since the inequality \eqref{4.0.19} holds for each $\epsilon\in(0,\epsilon_0]$, in particular it is true for $\epsilon=\epsilon_0$ and then, from \eqref{4.0.18} jointly with \eqref{4.0.19} and taking $\epsilon=\epsilon_0$, we obtain
$$
0<e^{\lambda t}\left(\f{\epsilon_0}{4\lambda}+\f{1}{y(0)}\right)<\f{\epsilon_0}{4\lambda},
$$
which forces $t$ to be finite. To determine the upper bound to $t$, we note that the last inequality gives
$e^{\lambda t}<(\epsilon_0y(0))/(\epsilon_0 y(0)+4\lambda)$, which implies
$$
t<\f{1}{\lambda}\ln{\left(\f{\epsilon_0 y(0)}{\epsilon_0 y(0)+4\lambda}\right)}=:T_+.
$$

Let us assume that $\lambda=0$. From \eqref{4.0.17} we obtain the inequality
$$
\f{1}{y(t)}-\f{1}{y(0)}>\f{\epsilon}{4}t
$$
and since $y(t)<y(0)<0$, we conclude that
$$
\f{\epsilon t}{4}<-\f{2}{y(0)},
$$
for each $\epsilon\in(0,\epsilon_0]$. This is enough to assure the wave breaking, while taking $\epsilon=\epsilon_0$ we have the upper bound to the maximal time of existence of the solution and conclude the demonstration of Theorem \ref{teo1.3}.

\section{Discussion and comments}

In this paper we considered the Cauchy problem \eqref{1.0.2}. The equation in \eqref{1.0.2}, for $\lambda>0$, can be seen as the dissipative version of \eqref{1.0.1}. No matter the value of $\lambda$, the problem \eqref{1.0.2} is locally well-posed as shown in Theorem \ref{teo1.1}. Similarly for the Camassa-Holm equation and other analogous equations (see \cite{const1998-1,const1998-2,silvajde2019,igor-jde,liu2011,mustafa}) \eqref{1.0.2} admits the wave breaking phenomenon (see Theorem \ref{teo1.2}), but differently from the Camassa-Holm equation, we cannot assure the global existence of solutions through the way we followed here, see \cite{hak,igor-jde,mustafa}. The only information we have about the possibility of global existence is given in Theorem \ref{teo1.2}: If the solutions has $x-$derivative bounded from below, then the solution does not blow-up in finite time. This result is a lighthouse to guide us to look for the global existence and also for the blow-up of solutions. Although, we could not determine whether this fundamental condition is satisfied, this result drove us to the direction to establish the blow-up of solutions manifested through wave breaking.

We would also like to observe that from the symmetry group of the equation in \eqref{1.0.2} we can conclude that it is only invariant with respect to translations in $t$ and $x$, which means that the most general invariant solutions of \eqref{1.0.2} are the travelling waves. Also, we note that the parameter $\lambda$ is irrelevant from the point of view of Lie symmetries.

Last, but not least, we observe that the presence of the parameter $\lambda$ makes the number of conserved quantities up to second order decreases. This can be seen by comparing the conserved quantities in Corollary \ref{cor3.1} with those given in Theorem \ref{teo3.3}. This is equivalent to say that the characteristic of the conservation laws for the equation in \eqref{1.0.2} is, actually, reduced to zero order differential functions.

\section*{Acknowledgements}

I. L. Freire is thankful to CNPq (grants 308516/2016-8 and 404912/2016-8) for financial support. N. Sales Filho thanks FaEng/UFMT for leaving to develop his PhD thesis. L. C. Souza and C. E. Toffoli are grateful to IFSP for all support provided. I. L. Freire would like to thank Professor A. Cheviakov for sharing his package GEM for calculating conserved currents and Dr. P. L. da Silva for discussions and suggestions concerning the manuscript.

\end{document}